\newtheorem{theorem}{Theorem}[section]
\newtheorem{cor}[theorem]{Corollary}
\newtheorem{lemma}[theorem]{Lemma}
\newtheorem{prop}[theorem]{Proposition}
\theoremstyle{definition}
\newtheorem{remark}{Remark}
\newtheorem{example}{Example}
\newcommand{\p}{\partial}
\newcommand{\Nbb}{\mathbb{N}}
\newcommand{\Rbb}{\mathbb{R}}
\renewcommand{\div}{\mbox{div}\,}
\newcommand{\la}{\langle}
\newcommand{\ra}{\rangle}
\newcommand{\Gb}{\beta}
\newcommand{\Ge}{\epsilon}
\newcommand{\Gf}{\Gvf}
\newcommand{\Gvf}{\varphi}
\newcommand{\Gg}{\gamma}
\newcommand{\Gc}{\chi}
\newcommand{\Gm}{\mu}
\newcommand{\Gs}{\sigma}
\newcommand{\Gy}{\psi}
\newcommand{\Gz}{\zeta}
\newcommand{\GG}{\Gamma}
\newcommand{\GO}{\Omega}
\newcommand{\beq}{\begin{equation}}
\newcommand{\eeq}{\end{equation}}
\def\ol{\overline}
\def\sm{\setminus}
\DeclareMathOperator{\divergence}{div}
\DeclareMathOperator{\supp}{supp}
\DeclareMathOperator*{\essinf}{ess\,inf}
\DeclareMathOperator{\tr}{\mathrm{tr}}
\DeclareMathOperator{\Lip}{Lip}
\DeclareMathOperator{\dist}{dist}
\DeclareMathOperator{\loc}{\mathrm{loc}}
\DeclareMathOperator{\Dom}{Dom}
\numberwithin{equation}{section}
\numberwithin{figure}{section}
\newcommand{\normal}{n}
\newcommand{\adv}{\Gb}
\newcommand{\reac}{\Gm}
\newcommand{\weight}{\adv \cdot \normal}
\begin{document}

\title{A revisit on well-posedness of a boundary value problem of a stationary advection equation without the separation condition}

\author{Masaki Imagawa\thanks{Graduate School of Informatics, Kyoto University (m\_imagawa@acs.i.kyoto-u.ac.jp, d.kawagoe@acs.i.kyoto-u.ac.jp)} and Daisuke Kawagoe\footnotemark[1]}

\date{\today}

\maketitle

\begin{abstract}
We consider a boundary value problem of a stationary advection equation in a bounded domain with Lipschitz boundary. It is known to be well-posed in $L^p$-based function spaces for $1 < p < \infty$ under the separation condition of the inflow and the outflow boundaries. In this article, we provide another sufficient condition for the well-posedness with $1 \leq p \leq \infty$.

{\flushleft{{\bf Keywords:} first order PDE, boundary value problem, trace operators}}
{\flushleft{{\bf 2020 Mathematics Subject Classification.} 35F15, 35L02}}

\end{abstract}

\section{Introduction}

In this article, we discuss well-posedness of a boundary value problem of a stationary advection equation in a bounded domain with Lipschitz boundary in $L^p$-based function spaces for $1 \leq p \leq \infty$ without the separation condition.

Let $\GO$ be a bounded domain in $\Rbb^d$ with Lipschitz boundary. Also, let $\adv \in W^{1, \infty}(\GO)^d$ and $\reac \in L^\infty(\GO)$. Let $1 \leq p \leq \infty$ and let $q$ be the H\"older conjugate, namely, a real number such that $p^{-1} + q^{-1} = 1$. Here and in what follows, we use the convention: $p^{-1} = 0$ when $p = \infty$. 

We consider the following boundary value problem:
\beq \label{BVP}
\begin{cases}
\adv \cdot \nabla u + \reac u = f &\mbox{ in } \GO,\\
u = g &\mbox{ on } \GG_-,
\end{cases}
\eeq
where $f \in W^q_{\adv, +}(\GO)'$ and $g \in L^p(\GG_-; |\weight|)$. The boundary value problem \eqref{BVP} is a basic model of inviscid flow. The sets $\GG_-$ and $\GG_+$ are called the inflow and the outflow boundaries respectively, and they are characterized by the sign of the inner product $\adv \cdot \normal$, where $n$ is the outward unit normal vector to $\p \GO$. We give their precise definitions in Section \ref{sec:pre}. In what follows, we assume that the inflow boundary $\GG_-$ has the positive measure with respect to the surface measure. 

For $1 \leq p \leq \infty$, a Banach space $W^p_\adv(\GO)$ is defined by
\[
W^p_\adv(\GO) := \{ v \in L^p(\GO) \mid \adv \cdot \nabla v \in L^p(\GO) \}
\]
with the norm $\| \cdot \|_{W^p_\adv(\GO)}$ defined by
\[
\| u \|_{W^p_\adv(\GO)} := \left( \| u \|_{L^p(\GO)}^p + \| \adv \cdot \nabla u \|_{L^p(\GO)}^p \right)^{\frac{1}{p}}
\]
for $1 \leq p < \infty$ and
\[
\| u \|_{W^\infty_\adv(\GO)} := \max \left\{ \| u \|_{L^\infty(\GO)}, \| \adv \cdot \nabla u \|_{L^\infty(\GO)} \right\}.
\]
The norms $\| \cdot \|_{L^p(\GG_\pm; |\weight|)}$ on function spaces $L^p(\GG_\pm; |\weight|)$ are defined by
\[
\| g \|_{L^p(\GG_\pm; |\weight|)} := \left( \int_{\GG_\pm} |g|^p |\weight|\,d\Gs_x \right)^{\frac{1}{p}},
\]
where $d\Gs_x$ is the surface measure on $\p \GO$. For a Banach space $X$, let $X'$ denote its dual space. Namely, $W^q_{\adv, +}(\GO)'$ in the setting of the boundary value problem \eqref{BVP} is the dual space of $W^q_{\adv, +}(\GO)$. Here, $W^q_{\adv, +}(\GO)$ is a subspace of $W^q_\adv(\GO)$ and its definition will be given later. We note that the dual space $X'$ is equipped with a norm $\| \cdot \|_{X'}$ defined by
\[
\| f \|_{X'} := \sup_{u \in X \setminus \{ 0 \}} \frac{|f(u)|}{\| u \|_X}. 
\] 
It is worth mentioning that $C^\infty(\ol{\GO})$ is dense in $W^p_\adv(\GO)$ for $1 \leq p < \infty$ \cite{J}. Hence, $\Lip(\ol{\GO})$ is also dense there, where $\Lip(\ol{\GO})$ is the function space consisting of Lipschitz functions on $\ol{\GO}$. 

The boundary value problem \eqref{BVP} with $f \in L^2(\GO)$ and $g = 0$ was investigated in the context of the semigroup theory \cite{1970Ba, GL}, where elliptic regularization was applied. In the context of the numerical computation, it was regarded as one of Friedrichs' systems when $p = 2$ and $f \in L^2(\GO)$ \cite{2006EG}. However, because the traces $v|_{\GG_\pm}$ for a function $v \in W^p_\adv(\GO)$ may not belong to $L^p(\GG_\pm; |\weight|)$ in general \cite{1970Ba}, it is not straightforward to define boundary operators appearing in Friedrichs' systems. 

In order to avoid this difficulty, Ern and Guermond \cite{2006EG} introduced the separation condition, or the boundaries $\GG_\pm$ are well-separated:
\[
\dist (\GG_+, \GG_-) := \inf_{x \in \GG_+, x' \in \GG_-} |x - x'| > 0.
\]
Under the separation condition, the trace operators $\Gg_\pm: W^p_\adv(\GO) \to L^p(\GG_\pm; |\weight|)$ for $1 \leq p < \infty$ are bounded. More precisely, the operators $\Gg_\pm: W^p_\adv(\GO) \to L^p(\GG_\pm; |\weight|)$ with $\Dom(\Gg_\pm) = \Lip(\ol{\GO})$ have bounded extensions. Based on this idea, well-posedness results of the boundary value problem \eqref{BVP} for general $p$ with $1 < p < \infty$ have been established \cite{BH, C, MTZ}. However, the separation condition does not seem natural from the view points of mathematical analysis on boundary value problems and its numerical computations. Actually, it was not assumed in \cite{1970Ba, GL}. This discrepancy motivates us to remove the condition. We note that the well-posedness of the boundary value problem \eqref{BVP} for $g \in L^2_{\loc}(\GG_-; |\weight|)$ without the separation condition was mentioned in \cite[Example 2.12 (iii)]{2004EG} without a proof or a reference.

In this article, we consider the trace operators in two ways. One way is that they are  continuous operators from $W^p_\adv(\GO)$ to $L^p_{\loc}(\GG_\pm; |\weight|)$ for $1 \leq p < \infty$, which was mentioned in \cite{AP, 1970Ba, GL} and was introduced for the transport equation \cite{DL}. By taking closed subsets $\GG_\pm'$ in $\GG_\pm$, we can say that the trace operators $\tilde{\Gg}_\pm: W^p_\adv(\GO) \to L^p(\GG_\pm'; |\weight|)$ with $\Dom(\tilde{\Gg}_\pm) = \Lip(\ol{\GO})$ have bounded extensions. However, they do not have bounded extensions as operators from $W^p_\adv(\GO)$ to $L^p(\GG_\pm; |\weight|)$, which causes difficulty for discussing the well-posedness. The other way is that they are closed operators from $W^p_\adv(\GO)$ to $L^p(\GG_\pm; |\weight|)$ with $\Dom(\Gg_\pm) = W^p_{\adv, \tr}(\GO)$, where
\[
W^p_{\adv, \tr}(\GO) := \{ u \in W^p_\adv(\GO) \mid \tilde{\Gg}_\pm u \in L^p(\GG_\pm; |\weight|) \}.
\]
We introduce the graph norm $\| \cdot \|_{W^p_{\adv, \tr}(\GO)}$ on the space $W^p_{\adv, \tr}(\GO)$;
\[
\| u \|_{W^p_{\adv, \tr}(\GO)} := \left( \| u \|_{W^p_\adv(\GO)}^p + \| \tilde{\Gg}_+ u \|_{L^p(\GG_+; \weight)}^p + \| \tilde{\Gg}_- u \|_{L^p(\GG_-; |\weight|)}^p \right)^{\frac{1}{p}}.
\]
Then, by Clarkson's first and second inequalities, the space $W^p_{\adv, \tr}(\GO)$ with the norm $\| \cdot \|_{W^p_{\adv, \tr}(\GO)}$ is a reflexive Banach spaces for any $1 < p < \infty$. We notice that $\Gg_\pm u = \tilde{\Gg}_\pm u$ for all $u \in W^p_{\adv, \tr}(\GO)$. Furthermore, we introduce subspaces $W^p_{\adv, \pm}(\GO)$ of $W^p_{\adv, \tr}(\GO)$, which are defined by
\[
W^p_{\adv, \pm}(\GO) := \{ u \in W^p_{\adv, \tr}(\GO) \mid \tilde{\Gg}_\pm u = 0 \}.
\]

In this article, we restrict ourselves to working on the space $W^p_{\adv, \tr}(\GO)$ rather than $W^p_\adv(\GO)$. However, as was mentioned in \cite{R}, it is not obvious whether Green's formula, which is a key tool in our argument, holds there or not. In order to justify our analysis, we require that
\begin{equation} \label{eq:density}
W^p_{\adv, \tr}(\GO) = \ol{\Lip(\ol{\GO})}^{\| \cdot \|_{W^p_{\adv, \tr}(\GO)}}
\end{equation}
for all $1 \leq p < \infty$. It is known that, if the $(d-2)$-dimensional Hausdorff measure of the boundary $\p \GG_-$ is finite, then $H^1_{\GG_-}(\GO)$ is dense in $W^2_{\adv, -}(\GO)$ \cite{1970Ba}, where
\[
H^1_{\GG_-}(\GO) := \{ u \in H^1(\GO) \mid u|_{\GG_-} = 0 \}.
\]
With the same assumption, we can show that the density \eqref{eq:density} holds for $1 \leq p \leq 2$. However, it is not obvious whether it still holds for $2 < p < \infty$. We shall give a sufficient condition for the relation \eqref{eq:density} to hold for all $1 \leq p < \infty$.

We first discuss well-posedness of the boundary value problem \eqref{BVP} in the weak sense under the assumption \eqref{eq:density}. A weak solution $u \in L^p(\GO)$ to the boundary value problem \eqref{BVP} is defined to be a function which satisfies
\beq \label{BVP_weak}
\int_\GO u (-\nabla \cdot (\adv v) + \reac v)\,dx = \la f, v \ra_{W^q_{\adv, +}(\GO)', W^q_{\adv, +}(\GO)} + \int_{\GG_-} g v |\weight|\,d\Gs_x
\eeq
for all $v \in W^q_{\adv, +}(\GO)$. 

\begin{theorem} \label{thm:WP_Lp}
Let $1 < p \leq \infty$ and $q$ be its H\"older conjugate. Assume \eqref{eq:density} and 
\beq \label{reacp}
\reac_p := \essinf_{x \in \GO} \left( \reac(x) - \frac{1}{p} \divergence \adv(x) \right) > 0.
\eeq
Then, for any $f \in W^q_{\adv, +}(\GO)'$ and $g \in L^p(\GG_-; |\weight|)$, there exists a unique weak solution $u$ to the boundary value problem \eqref{BVP} in $L^p(\GO)$. Moreover, we have 
\[
\| u \|_{L^p(\GO)} \leq C_{1, p} \left( \| f \|_{W^q_{\adv, +}(\GO)'} + \| g \|_{L^p(\GG_-; |\weight|)} \right),
\]
where
\begin{align}
C_{1, p} :=& \left( 1 + C_{2, p} \right) \reac_p^{-1} \left ( 1 +  \reac_p + \| \adv \|_{W^{1, \infty}(\GO)} + \| \reac \|_{L^\infty(\GO)} \right), \label{def:C1p} \\
C_{2, p} :=& p^{\frac{1}{p}} + \| \adv \|_{W^{1, \infty}(\GO)}^{\frac{1}{p}}. \label{def:C2p}
\end{align}
\end{theorem}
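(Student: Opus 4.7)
My plan is to recast \eqref{BVP_weak} as an inf-sup problem and invoke the Banach-Necas-Babuska (BNB) theorem. Set $X := L^p(\GO)$, $Y := W^q_{\adv,+}(\GO)$, and introduce the continuous bilinear form and linear functional
\[
a(u,v) := \int_\GO u\,L^*v\,dx, \quad L^*v := -\nabla\cdot(\adv v) + \reac v, \quad \ell(v) := \la f,v\ra_{Y',Y} + \int_{\GG_-} gv|\weight|\,d\Gs_x,
\]
so that \eqref{BVP_weak} reads $a(u,v)=\ell(v)$ for every $v\in Y$. Continuity of $\ell$ on $Y$ is automatic, because every $v\in Y$ carries an $L^q(\GG_-;|\weight|)$-trace built into the norm of $W^q_{\adv,+}(\GO)$.

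The core analytical step is the dual a priori estimate $\|v\|_Y \leq C_{1,p}\|L^*v\|_{L^q(\GO)}$ for $v\in Y$. By \eqref{eq:density} and closedness of $W^q_{\adv,+}$ in $W^q_{\adv,\tr}$, it suffices to prove it for $v\in\Lip(\ol{\GO})$ with $v|_{\GG_+}=0$ and then pass to the limit. For such $v$ with $1<q<\infty$, multiplying $L^*v$ by $|v|^{q-2}v$ and integrating by parts on $\GO$ produces
\[
\int_\GO L^*v \cdot |v|^{q-2}v\,dx = \int_\GO \Bigl(\reac - \tfrac{1}{p}\divergence\adv\Bigr)|v|^q\,dx + \tfrac{1}{q}\int_{\GG_-}|v|^q|\weight|\,d\Gs_x,
\]
the boundary contribution on $\GG_+$ vanishing because $v|_{\GG_+}=0$; the case $q=1$ (i.e.\ $p=\infty$) is handled analogously with the multiplier $\sgn(v)$, legitimized by a DiPerna-Lions chain rule since $\adv\in W^{1,\infty}$. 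Hypothesis \eqref{reacp} and H\"older's inequality then yield $\|v\|_{L^q(\GO)}\leq \reac_p^{-1}\|L^*v\|_{L^q(\GO)}$ together with a companion bound on $\|\tilde{\Gg}_-v\|_{L^q(\GG_-;|\weight|)}$, while rewriting $\adv\cdot\nabla v = -L^*v + (\reac-\divergence\adv)v$ controls $\|\adv\cdot\nabla v\|_{L^q(\GO)}$. Assembling these three bounds into the $W^q_{\adv,+}$-norm reproduces $C_{1,p}$ after routine bookkeeping, with the factors $p^{1/p}$ and $\|\adv\|_{W^{1,\infty}}^{1/p}$ in $C_{2,p}$ arising from Young's inequality applied to the trace and advective-derivative estimates.

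With this estimate in hand, both BNB conditions are verified. Injectivity of the adjoint is immediate: $a(u,v)=0$ for every $u\in L^p(\GO)$ forces $L^*v=0$, hence $v=0$ by the estimate. For the primal inf-sup I reduce, via Hahn-Banach, to the uniqueness claim that if $w\in L^p(\GO)$ satisfies $\int_\GO w\,L^*v\,dx=0$ for every $v\in Y$, then $w=0$. Testing against $v\in C_c^\infty(\GO)\subset Y$ gives $\adv\cdot\nabla w + \reac w = 0$ in $\Dcal'(\GO)$, so $w\in W^p_\adv(\GO)$; testing next against $v\in\Lip(\ol{\GO})$ with $v|_{\GG_+}=0$ and $v|_{\GG_-}$ supported in a compact subset of the relative interior of $\GG_-$, together with Green's formula justified by \eqref{eq:density}, yields $\tilde{\Gg}_-w=0$ locally and hence $\sigma$-a.e.\ on $\GG_-$. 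A DiPerna-Lions renormalization then legitimizes multiplying $\adv\cdot\nabla w + \reac w = 0$ by $|w|^{p-2}w$ and integrating by parts, producing $\reac_p\|w\|_p^p + \tfrac{1}{p}\|\tilde{\Gg}_+w\|_{L^p(\GG_+;\weight)}^p \leq 0$, whence $w=0$. BNB now delivers existence, uniqueness, and the announced estimate. The step I anticipate being the main obstacle is the renormalization in the uniqueness argument, since it requires propagating the $\tilde{\Gg}_\pm$-traces through a smoothing procedure for functions in $W^p_\adv(\GO)$; this is precisely where the full strength of \eqref{eq:density} enters, as it is what legalizes the Green's-formula computations in $W^p_{\adv,\tr}(\GO)$.
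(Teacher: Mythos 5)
For $1<p<\infty$ your proposal is essentially the paper's proof: the paper (Lemma \ref{lem:WP_ad}) establishes exactly your dual a priori estimate $\|v\|_{W^q_{\adv,\tr}(\GO)}\leq C_{1,p}\|L'v\|_{L^q(\GO)}$ by testing with $|v|^{q-2}v$ and using Green's formula under \eqref{eq:density}, proves the same uniqueness claim (interior equation in $\Dcal'$, then $\tilde{\Gg}_- w=0$ via compactly supported traces and Proposition \ref{prop:trace_surj}, then the energy identity), and then transposes. Your Hahn--Banach/closed-range phrasing of the primal inf-sup is equivalent to the paper's route through Proposition \ref{prop:BNB} and Remark \ref{rem:equal} ($c_1=c_2$). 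One small imprecision: you reduce the dual estimate to $v\in\Lip(\ol{\GO})$ with $v|_{\GG_+}=0$, but \eqref{eq:density} gives density of $\Lip(\ol{\GO})$ in $W^q_{\adv,\tr}(\GO)$, not density of $\Lip_{\GG_+}(\ol{\GO})$ in $W^q_{\adv,+}(\GO)$; the correct reduction approximates by general Lipschitz functions and observes that the $\GG_+$ boundary term $\tfrac{1}{q}\int_{\GG_+}|v_m|^q\Gb\cdot n\,d\Gs_x$ converges to $0$ because $\tilde{\Gg}_+v=0$.

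The genuine divergence, and the gap, is at $p=\infty$. You propose to run the argument directly with $q=1$ and the multiplier $\sgn(v)$ and then to invoke BNB, but Proposition \ref{prop:BNB} requires the test space $W$ to be reflexive, and $W^1_{\adv,+}(\GO)$ is not; the implication from the two inf-sup conditions to existence passes through $W''$ and fails to be automatic in the non-reflexive setting. Likewise your uniqueness step ``multiply by $|w|^{p-2}w$'' is meaningless at $p=\infty$ and must be rerouted through a finite exponent (using $w\in L^\infty(\GO)\subset L^p(\GO)$ and $\reac_p>0$ for $p$ large). The paper avoids all of this by proving the theorem for $p_\infty<p<\infty$, checking that $\limsup_{p\to\infty}C_{1,p}\leq C_{1,\infty}$, and passing to the limit in the $L^p$ bounds; this is also what produces the specific constant \eqref{def:C1_infty}. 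Your approach could be repaired without reflexivity --- the a priori estimate gives $L'$ closed range on $W^1_{\adv,+}(\GO)$, the uniqueness claim gives dense range, hence $L'$ is onto $L^1(\GO)$ and one can define $u\in (L^1(\GO))'=L^\infty(\GO)$ by $u(L'v):=F(v)$ --- but as written the appeal to BNB at $p=\infty$ is not justified, and you should either carry out this duality construction explicitly or adopt the paper's limiting argument.
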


We note that $\| \cdot \|_{L^\infty(\GG_\pm; |\weight|)} = \| \cdot \|_{L^\infty(\GG_\pm)}$. Also, constants $C_{1, \infty}$ in \eqref{def:C1p} and $C_{2, \infty}$ are regarded as
\begin{equation} \label{def:C1_infty}
C_{1, \infty} = 3 \reac_\infty^{-1} \left ( 1 +  \reac_\infty + \| \adv \|_{W^{1, \infty}(\GO)} + \| \reac \|_{L^\infty(\GO)} \right)
\end{equation}
and $C_{2, \infty} = 2$ respectively, which is obtained by taking the limit $p \to \infty$ in \eqref{def:C1p} and \eqref{def:C2p} formally.

\begin{remark}
According to \cite{C}, the assumption \eqref{reacp} can be replaced by the following one for $1 < p < \infty$: there exists a Lipschitz function $\Gz$ on $\ol{\GO}$ such that
\[
\essinf_{x \in \GO} e^{\Gz(x)} \left( \reac(x) - \frac{1}{p} \divergence \adv(x) -\frac{1}{p} \adv(x) \cdot \nabla \Gz(x) \right) > 0.
\]
The existence of the function $\Gz$ is known for some $C^1$ vector fields \cite{DEF}. However, it is not obvious whether this assumption covers the case $p=\infty$.
\end{remark}

We next discuss well-posedness of the boundary value problem \eqref{BVP} in the strong sense; a solution $u$ in $W^p_{\adv, \tr}(\GO)$. 

Let $f \in L^p(\GO)$ in the boundary value problem \eqref{BVP}. Then, we may regard it as a bounded linear functional on $W^q_{\adv, +}(\GO)$ by
\[
\la f, v \ra_{W^q_{\adv, +}(\GO)', W^q_{\adv, +}(\GO)} := \int_\GO f v\,dx, \quad v \in W^q_{\adv, +}(\GO).
\]
Thus, by Theorem \ref{thm:WP_Lp}, there exists the unique weak solution $u \in L^p(\GO)$ to the  boundary value problem \eqref{BVP}. Furthermore, we have the following regularity result.

\begin{theorem} \label{thm:WP_Wp}
Let $1 \leq p \leq \infty$, and assume \eqref{eq:density} and \eqref{reacp}. Then, for any $f \in L^p(\GO)$ and $g \in L^p(\GG_-; |\weight|)$, there exists a unique strong solution $u$ to the boundary value problem \eqref{BVP} in $W^p_{\adv, \tr}(\GO)$. Moreover, we have 
\begin{equation*}
\| u \|_{W^p_{\adv, \tr}(\GO)} \leq C_{1, p}' \left( \| f \|_{L^p(\GO)} + \| g \|_{L^p(\GG_-; |\weight|)} \right),
\end{equation*}
where $C_{1, p}' := (1 + C_{2, p}) \{ 2 + \left( 1 + \| \reac \|_{L^\infty(\GO)} \right) C_{1, p} \}$, and $C_{1, p}$ is the constant defined by \eqref{def:C1p}.
\end{theorem}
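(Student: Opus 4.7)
The plan is to upgrade the weak solution from Theorem \ref{thm:WP_Lp} to a strong one, establishing the three ingredients of $W^p_{\adv,\tr}(\GO)$ in turn: interior regularity, the inflow trace, and the outflow trace. Regarding $f\in L^p(\GO)$ as an element of $W^q_{\adv,+}(\GO)'$ through $v\mapsto\int_\GO fv\,dx$ (for which $\|f\|_{W^q_{\adv,+}(\GO)'}\le\|f\|_{L^p(\GO)}$ by H\"older), Theorem \ref{thm:WP_Lp} provides a unique weak solution $u\in L^p(\GO)$ satisfying $\|u\|_{L^p(\GO)}\le C_{1,p}(\|f\|_{L^p(\GO)}+\|g\|_{L^p(\GG_-;|\weight|)})$. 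Uniqueness among strong solutions follows because every strong solution is a weak one via Green's formula (valid under \eqref{eq:density}). Testing \eqref{BVP_weak} against $v\in C_c^\infty(\GO)\subset W^q_{\adv,+}(\GO)$ gives $\adv\cdot\nabla u+\reac u=f$ in $\mathcal{D}'(\GO)$, hence in $L^p(\GO)$, so $u\in W^p_\adv(\GO)$ with $\|\adv\cdot\nabla u\|_{L^p(\GO)}\le\|f\|_{L^p(\GO)}+\|\reac\|_{L^\infty(\GO)}\|u\|_{L^p(\GO)}$.

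For the inflow trace I would test \eqref{BVP_weak} against Lipschitz functions $v\in W^q_{\adv,+}(\GO)$ whose $\p\GO$-support is a compact subset of $\GG_-$ (so that $v$ vanishes in a neighborhood of $\GG_0\cup\GG_+$). Density of $C^\infty(\ol{\GO})$ in $W^p_\adv(\GO)$ together with continuity of the local trace $\tilde{\Gg}_-\colon W^p_\adv(\GO)\to L^p_\loc(\GG_-;|\weight|)$ validates a localized Green's formula; comparison with \eqref{BVP_weak} then forces $\int_{\GG_-}(\tilde{\Gg}_- u-g)v|\weight|\,d\Gs_x=0$, and density of such traces $v|_{\GG_-}$ in $L^q(\GG_-;|\weight|)$ yields $\tilde{\Gg}_- u=g$ a.e. In particular $\tilde{\Gg}_- u\in L^p(\GG_-;|\weight|)$ with $\|\tilde{\Gg}_- u\|_{L^p(\GG_-;|\weight|)}=\|g\|_{L^p(\GG_-;|\weight|)}$.

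The outflow trace is handled by truncation. Setting $u_N:=\max(-N,\min(N,u))$, the chain rule for the advective derivative gives $\adv\cdot\nabla u_N=\chi_{\{|u|<N\}}\adv\cdot\nabla u\in L^p(\GO)$ and $u_N\in L^\infty(\GO)\cap W^p_\adv(\GO)$; approximating $u_N$ by the truncations of $C^\infty(\ol{\GO})$-approximants of $u$ in $W^p_\adv(\GO)$ shows $|\tilde{\Gg}_\pm u_N|\le N$ a.e., so $u_N\in W^p_{\adv,\tr}(\GO)$. Invoking \eqref{eq:density} to approximate $u_N$ by $w_k\in\Lip(\ol{\GO})$ in the graph norm and passing to the limit in the classical identity
\[
\tfrac{1}{p}\int_{\GG_+}|w|^p|\weight|\,d\Gs_x-\tfrac{1}{p}\int_{\GG_-}|w|^p|\weight|\,d\Gs_x=\int_\GO|w|^{p-2}w(\adv\cdot\nabla w)\,dx+\tfrac{1}{p}\int_\GO|w|^p\,\divergence\adv\,dx
\]
valid for $w\in\Lip(\ol{\GO})$ produces the identity for $u_N$. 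Substituting $\adv\cdot\nabla u_N=\chi_{\{|u|<N\}}(f-\reac u)$, using $\reac-\tfrac{1}{p}\divergence\adv\ge\reac_p>0$, and sending $N\to\infty$ (dominated convergence for $\tilde{\Gg}_- u_N\to g$ in $L^p(\GG_-;|\weight|)$, Fatou on $\tilde{\Gg}_+u_N\to\tilde{\Gg}_+u$ in $L^p_\loc(\GG_+;|\weight|)$, and $\tfrac{N^p}{p}\int_{\{|u|\ge N\}}|\divergence\adv|\,dx\to 0$ by Markov) yields
\[
\tfrac{1}{p}\|\tilde{\Gg}_+u\|_{L^p(\GG_+;|\weight|)}^p+\reac_p\|u\|_{L^p(\GO)}^p\le\|u\|_{L^p(\GO)}^{p-1}\|f\|_{L^p(\GO)}+\tfrac{1}{p}\|g\|_{L^p(\GG_-;|\weight|)}^p.
\]
Young's inequality absorbs $\|u\|^{p-1}\|f\|$, and combining with the $L^p$ bound from Theorem \ref{thm:WP_Lp} and the earlier estimate on $\|\adv\cdot\nabla u\|_{L^p}$ produces the stated inequality after summing the $p$-th powers into the graph norm; the factor $C_{2,p}=p^{1/p}+\|\adv\|_{W^{1,\infty}(\GO)}^{1/p}$ arises from the Young balancing of $\|u\|^{p-1}\|f\|$ against $\reac_p\|u\|^p$ together with the $\divergence\adv$ term.

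The main obstacles are (i) the chain rule $\adv\cdot\nabla u_N=\chi_{\{|u|<N\}}\adv\cdot\nabla u$ for $u\in W^p_\adv(\GO)$, and (ii) verifying $u_N\in W^p_{\adv,\tr}(\GO)$ so that \eqref{eq:density} may be applied to it. The endpoints $p\in\{1,\infty\}$ require separate attention: for $p=1$ the test function $|u|^{p-2}u=\sgn u$ must be replaced by a smooth regularization of $\sgn u$, while for $p=\infty$ the estimate is obtained by passing $L^p\to L^\infty$ in a limit $p\to\infty$ or by working directly with essential-supremum bounds, consistent with the formal interpretation of $C_{1,\infty}$ and $C_{2,\infty}$ indicated after Theorem \ref{thm:WP_Lp}.
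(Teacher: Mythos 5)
For $1 < p < \infty$ your route coincides with the paper's: bootstrap the weak solution of Theorem \ref{thm:WP_Lp} through the equation to get $u \in W^p_\adv(\GO)$, identify the inflow trace by testing against Lipschitz functions whose boundary support sits in a closed subset $\GG_-'$ of $\GG_-$ (Proposition \ref{prop:trace_surj}), and control the outflow trace by the Green-type inequality of Lemma \ref{lem:trace_adv}. Your truncation argument $u_N = \max(-N,\min(N,u))$ for establishing $\tilde{\Gg}_+ u \in L^p(\GG_+;\weight)$ is in fact more careful than the paper, which simply invokes Lemma \ref{lem:trace_adv} on the strong solution; but it hinges on the chain rule $\adv\cdot\nabla u_N = \chi_{\{|u|<N\}}\,\adv\cdot\nabla u$ in $W^p_\adv(\GO)$ and on $u_N \in W^p_{\adv,\tr}(\GO)$, which you flag as obstacles rather than resolve. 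These are provable (renormalization for Lipschitz fields), but as written the step is incomplete.

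The genuine gap is the case $p = 1$. Theorem \ref{thm:WP_Lp} is stated only for $1 < p \leq \infty$, so for $p=1$ there is no weak solution to bootstrap from, and your opening step collapses; replacing $|u|^{p-2}u$ by a regularized $\sgn u$ repairs only the a priori energy identity, not existence or uniqueness. The paper devotes a separate argument to this endpoint: it truncates the data to $f_N \in L^\infty(\GO)$ and $g_N \in L^\infty(\GG_-)$, solves the problem for $1 < p < p_1$ with $p_1 := 1 + \reac_1/(2\|\adv\|_{W^{1,\infty}(\GO)})$, checks that $\limsup_{p\to 1} C_{1,p} \leq C_{1,1}$ so the bounds are uniform as $p \downarrow 1$, concludes that $\{u_N\}$ is Cauchy in $W^1_\adv(\GO)$, and identifies the limit as a solution. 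Uniqueness in $W^1_\adv(\GO)$ also cannot be obtained by duality (the Banach--Ne\v{c}as--Babu\v{s}ka machinery used for $1<p<\infty$ requires reflexivity of the test space); the paper instead approximates the difference of two solutions by Lipschitz functions and runs a sign-decomposition argument on $\GO_\pm^\Ge = \{\pm\tilde{u}^\Ge > 0\}$ following \cite{BCJK}. None of this is present in your proposal, so the theorem as stated for $p=1$ is not proved.
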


We remark that the traces of a function $u \in W^\infty_\adv(\GO)$ on $L^\infty_{\loc}(\GG_\pm)$ are not defined due to the lack of density of $\Lip(\ol{\GO})$. Thus, we did not define the space $W^\infty_{\adv, \tr}(\GO)$. However, since $L^\infty(\GG_\pm) \subset L^p(\GG_\pm; |\weight|)$ and $W^\infty_\adv(\GO) \subset W^p_\adv(\GO)$ for all $1 < p < \infty$, we may consider the local $L^p$ traces of $u \in W^\infty_\adv(\GO)$. Theorem \ref{thm:WP_Wp} for $p = \infty$ means that, if $f \in L^\infty(\GO)$ and $g \in L^\infty(\GG_-)$, then the boundary value problem \eqref{BVP} eventually has the unique solution $u \in W^\infty_\adv(\GO)$ for $1 \leq p < \infty$. In that case, the norm $\| \cdot \|_{W^\infty_{\adv. \tr}(\GO)}$ of the function space $W^\infty_{\adv, \tr}(\GO)$ is defined by
\[
\| u \|_{W^\infty_{\adv. \tr}(\GO)} := \max \left\{ \| u \|_{W^\infty_\adv(\GO)}, \| \tilde{\Gg}_+ u \|_{L^\infty(\GG_+)}, \| \tilde{\Gg}_- u \|_{L^\infty(\GG_-)} \| \right\}.
\]

Unlike Theorem \ref{thm:WP_Lp}, the case $p = 1$ is included in Theorem \ref{thm:WP_Wp}. Also, as a byproduct, Theorem \ref{thm:WP_Wp} implies the surjectivity of the trace operators $\Gg_\pm: W^p_{\adv, \tr}(\GO) \to L^p(\GG_\pm; |\weight|)$.

As corollaries, we obtain well-posedness results for the adjoint boundary value problem:
\begin{equation} \label{BVP_ad}
\begin{cases}
-\nabla \cdot (\adv u) + \reac u = f &\mbox{ in } \GO,\\
u = g &\mbox{ on } \GG_+,
\end{cases}
\end{equation}
where $f \in W^q_{\adv, -}(\GO)'$ and $g \in L^p(\GG_+; \weight)$.

\begin{cor} \label{cor:WP_Lp_ad}
Let $1 < p \leq \infty$ and $q$ be its H\"older conjugate. Assume \eqref{eq:density} and 
\begin{equation} \label{reacq}
\reac_q > 0,
\end{equation}
where $\reac_q$ is the constant defined by \eqref{reacp} with $p$ replaced by $q$. Then, for any $f \in W^q_{\adv, -}(\GO)'$ and $g \in L^p(\GG_+; \weight)$, there exists a unique weak solution $u$ to the boundary value problem \eqref{BVP_ad} in $L^p(\GO)$. Moreover, we have 
\[
\| u \|_{L^p(\GO)} \leq \tilde{C}_{1, q} \left( \| f \|_{W^q_{\adv, +}(\GO)'} + \| g \|_{L^p(\GG_+; |\weight|)} \right),
\]
where
\begin{equation} \label{def:C1q} 
\tilde{C}_{1, q} := \left( 1 + C_{2, q} \right) \reac_q^{-1} \left ( 1 +  \reac_q + \| \reac \|_{L^\infty(\GO)} \right). 
\end{equation}
\end{cor}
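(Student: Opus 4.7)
The plan is to reduce Corollary \ref{cor:WP_Lp_ad} to Theorem \ref{thm:WP_Lp} via the change of vector field $\tilde{\adv} := -\adv$ paired with the reactive coefficient $\tilde{\reac} := \reac - \divergence \adv$. Expanding the divergence in the adjoint equation gives
\[
-\nabla \cdot (\adv u) + \reac u = \tilde{\adv} \cdot \nabla u + \tilde{\reac} u,
\]
so \eqref{BVP_ad} becomes precisely problem \eqref{BVP} for the triple $(\tilde{\adv}, \tilde{\reac}, g)$. Reversing the sign of the vector field swaps $\GG_-$ and $\GG_+$, so the boundary condition $u = g$ on $\GG_+$ becomes an inflow condition with respect to $\tilde{\adv}$.

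Two hypotheses of Theorem \ref{thm:WP_Lp} must then be verified for $(\tilde{\adv}, \tilde{\reac})$. The space $W^p_{\tilde{\adv}, \tr}(\GO)$ coincides with $W^p_{\adv, \tr}(\GO)$ isometrically---the $L^p$-norm of $\tilde{\adv} \cdot \nabla u$ equals that of $\adv \cdot \nabla u$, and $|\tilde{\weight}|$ equals $|\weight|$ on each boundary piece with $\GG_-$ and $\GG_+$ interchanged---so the density assumption \eqref{eq:density} transfers directly. A direct calculation yields
\[
\tilde{\reac}_p = \essinf_{x \in \GO} \Bigl( \reac(x) - \divergence \adv(x) + \tfrac{1}{p} \divergence \adv(x) \Bigr) = \essinf_{x \in \GO} \Bigl( \reac(x) - \tfrac{1}{q} \divergence \adv(x) \Bigr) = \reac_q,
\]
so the positivity condition \eqref{reacq} is exactly \eqref{reacp} applied to the transformed coefficients. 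Theorem \ref{thm:WP_Lp} then furnishes a unique weak solution $u \in L^p(\GO)$, and the identifications $W^q_{\tilde{\adv}, +}(\GO) = W^q_{\adv, -}(\GO)$ and $L^p(\tilde{\GG}_-; |\tilde{\weight}|) = L^p(\GG_+; \weight)$ put the conclusion in the form stated in the corollary. The weak formulation for the transformed problem, after inserting $-\nabla \cdot (\tilde{\adv} v) + \tilde{\reac} v = \adv \cdot \nabla v + \reac v$, is the natural weak formulation of \eqref{BVP_ad}.

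The delicate point is matching the constant: substituting the transformed data blindly into \eqref{def:C1p} produces an expression involving $C_{2,p}$, $\|\adv\|_{W^{1,\infty}(\GO)}$ and $\|\reac - \divergence \adv\|_{L^\infty(\GO)}$, whereas $\tilde{C}_{1,q}$ in \eqref{def:C1q} is written with $C_{2,q}$ and without any $W^{1,\infty}$-norm of $\adv$. I expect the cleaner expression is obtained by rerunning the a priori estimate behind Theorem \ref{thm:WP_Lp} directly on the adjoint equation, testing the transformed equation with $|v|^{q-2} v$ rather than $|v|^{p-2} v$, so that integration by parts produces the combination $\reac - q^{-1} \divergence \adv = \reac_q$ and the divergence contribution is absorbed into the left-hand side rather than estimated in $L^\infty$.
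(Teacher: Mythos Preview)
Your reduction via $\tilde{\adv}=-\adv$, $\tilde{\reac}=\reac-\divergence\adv$ is exactly the mechanism behind the paper's statement of Corollary~\ref{cor:WP_Lp_ad}; the paper gives no separate proof and simply records it as a consequence of Theorem~\ref{thm:WP_Lp}. Your verification that $\tilde{\reac}_p=\reac_q$, that the trace spaces coincide (so \eqref{eq:density} transfers), and that $W^q_{\tilde{\adv},+}(\GO)=W^q_{\adv,-}(\GO)$ is correct and complete for existence and uniqueness.

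Your diagnosis of the constant is also right, and worth making explicit since the paper does not. Plugging the transformed data into \eqref{def:C1p} gives $(1+C_{2,p})\reac_q^{-1}\bigl(1+\reac_q+\|\adv\|_{W^{1,\infty}(\GO)}+\|\reac-\divergence\adv\|_{L^\infty(\GO)}\bigr)$, which is not \eqref{def:C1q}. The sharper form \eqref{def:C1q} arises, as you say, by rerunning the argument of Lemma~\ref{lem:WP_ad} with the operator $Lu=\adv\cdot\nabla u+\reac u$ on $W^q_{\adv,-}(\GO)$ in place of $L'$: the Green-formula step then produces $\reac-\tfrac{1}{q}\divergence\adv$ directly, and in the derivative estimate one writes $\adv\cdot\nabla u=Lu-\reac u$ with no $(\divergence\adv)u$ term, so $\|\adv\|_{W^{1,\infty}(\GO)}$ never enters. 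The factor $(1+C_{2,q})$ comes from applying Corollary~\ref{cor:trace_adv} to $u\in W^q_{\adv,-}(\GO)$. One small correction to your last paragraph: the test function in this rerun is still $|u|^{q-2}u$ (with $u$ the $W^q$-function), exactly as in Lemma~\ref{lem:WP_ad}; what changes is the operator, not the exponent of the test function.
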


\begin{cor} \label{cor:WP_Wp_ad}
Let $1 \leq p \leq \infty$ and $q$ be its H\"older conjugate. Assume \eqref{eq:density} and \eqref{reacq}. Then, for any $f \in L^p(\GO)$ and $g \in L^p(\GG_+; \weight)$, there exists a unique strong solution $u$ to the boundary value problem \eqref{BVP} in $W^p_{\adv, \tr}(\GO)$. Moreover, we have 
\begin{equation*}
\| u \|_{W^p_{\adv, \tr}(\GO)} \leq \tilde{C}_{1, q}' \left( \| f \|_{L^p(\GO)} + \| g \|_{L^p(\GG_-; |\weight|)} \right),
\end{equation*}
where $\tilde{C}_{1, q}' := (1 + C_{2, q}) \{ 1 + \left( 1 + \| \reac \|_{L^\infty(\GO)} \right) \tilde{C}_{1, q}\}$, and $\tilde{C}_{1, q}$ is the constant defined by \eqref{def:C1q}. 
\end{cor}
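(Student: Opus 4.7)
The plan is to mirror, in the adjoint setting, the passage from Theorem \ref{thm:WP_Lp} to Theorem \ref{thm:WP_Wp}, now starting from Corollary \ref{cor:WP_Lp_ad}. First I would use the H\"older embedding $L^p(\GO) \hookrightarrow W^q_{\adv, -}(\GO)'$, whose operator norm is bounded by $1$, and apply Corollary \ref{cor:WP_Lp_ad} to obtain a unique weak solution $u \in L^p(\GO)$ of \eqref{BVP_ad} with $\|u\|_{L^p(\GO)} \leq \tilde{C}_{1, q}(\|f\|_{L^p(\GO)} + \|g\|_{L^p(\GG_+; \weight)})$.

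Next I would upgrade $u$ to $W^p_\adv(\GO)$. Testing the weak formulation against arbitrary $\varphi \in C_c^\infty(\GO) \subset W^q_{\adv, -}(\GO)$ shows that $-\nabla \cdot (\adv u) + \reac u = f$ holds in $\Dcal'(\GO)$; expanding the divergence yields
\[
\adv \cdot \nabla u = (\reac - \divergence \adv) u - f \in L^p(\GO),
\]
so $u \in W^p_\adv(\GO)$ with a quantitative bound on $\|\adv \cdot \nabla u\|_{L^p(\GO)}$ involving $\|\reac\|_{L^\infty(\GO)}$, $\|\adv\|_{W^{1,\infty}(\GO)}$, and the $L^p$ bound on $u$ from the previous step.

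The crux is to show $\tilde{\Gg}_- u \in L^p(\GG_-; |\weight|)$, since the trace on $\GG_+$ coincides with $g$ by the boundary condition. I would argue by duality. Given $w$ in the unit ball of $L^q(\GG_-; |\weight|)$, apply Theorem \ref{thm:WP_Wp} to the forward problem \eqref{BVP} with source $0$, boundary datum $w$ on $\GG_-$, and exponent $q$ --- permissible since $\reac_q > 0$ by hypothesis --- to obtain $v \in W^q_{\adv, \tr}(\GO)$ with $\|v\|_{W^q_{\adv, \tr}(\GO)} \leq C_{1, q}' \|w\|_{L^q(\GG_-; |\weight|)}$. Using $\adv \cdot \nabla v + \reac v = 0$ and $-\nabla \cdot (\adv u) + \reac u = f$ in $\Dcal'(\GO)$, a direct computation yields
\[
\nabla \cdot (\adv u v) = u(\adv \cdot \nabla v + \reac v) - v(-\nabla \cdot (\adv u) + \reac u) = -v f.
\]
Green's formula --- justified by \eqref{eq:density} via simultaneous Lipschitz approximation of $u$ and $v$ in the respective graph norms --- then gives
\[
\int_{\GG_+} g\, (\tilde{\Gg}_+ v)\, \weight\, d\Gs_x - \int_{\GG_-} (\tilde{\Gg}_- u)\, w\, |\weight|\, d\Gs_x = -\int_\GO v f\, dx,
\]
so that H\"older's inequality combined with the bound on $v$ gives $\|\tilde{\Gg}_- u\|_{L^p(\GG_-; |\weight|)} \leq C_{1, q}' (\|f\|_{L^p(\GO)} + \|g\|_{L^p(\GG_+; \weight)})$ upon taking the supremum over $w$.

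The main obstacle will be the rigorous justification of Green's formula for the pair $(u, v) \in W^p_{\adv, \tr}(\GO) \times W^q_{\adv, \tr}(\GO)$; this is precisely where \eqref{eq:density} is essential, as it permits simultaneous Lipschitz approximations $(u_n, v_n)$ of $(u, v)$ for which the identity is classical, after which convergence in the $W^p_{\adv, \tr}(\GO)$ and $W^q_{\adv, \tr}(\GO)$ norms allows passage to the limit. Collecting the three bounds and repackaging the constants into the prescribed form of $\tilde{C}_{1, q}'$ is then routine arithmetic.
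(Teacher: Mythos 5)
There are two genuine gaps in your proposal, and it is also worth noting that it bypasses the route the paper intends. The paper obtains Corollary \ref{cor:WP_Wp_ad} directly from Theorem \ref{thm:WP_Wp}: substituting $\adv \mapsto -\adv$ and $\reac \mapsto \reac - \divergence \adv$ turns \eqref{BVP_ad} into a problem of the form \eqref{BVP} with $\GG_+$ and $\GG_-$ interchanged, and the quantity \eqref{reacp} computed for the new coefficients equals $\reac_q$, so hypothesis \eqref{reacq} is exactly \eqref{reacp} for the transformed problem; Theorem \ref{thm:WP_Wp} then applies verbatim for all $1 \leq p \leq \infty$. Your first gap is the case $p = 1$, which the corollary claims but your argument cannot reach: you start by invoking Corollary \ref{cor:WP_Lp_ad}, which holds only for $1 < p \leq \infty$, so for $p = 1$ there is no weak solution to upgrade. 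In the paper this endpoint is handled inside Theorem \ref{thm:WP_Wp} by a separate truncation argument ($f_N$, $g_N$, solving for exponents $p \downarrow 1$ and passing to the limit in $W^1_\adv(\GO)$) plus a dedicated uniqueness proof following \cite{BCJK}; your proposal would have to reproduce all of that for the adjoint problem and does not.

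The second gap is a circularity in the duality step. You justify the identity $\int_{\GG_+} g\, (\tilde{\Gg}_+ v)\, \weight\, d\Gs_x - \int_{\GG_-} (\tilde{\Gg}_- u)\, w\, |\weight|\, d\Gs_x = -\int_\GO v f\, dx$ by ``simultaneous Lipschitz approximation of $u$ and $v$ in the respective graph norms,'' but approximating $u$ in the $W^p_{\adv, \tr}(\GO)$ norm via \eqref{eq:density} presupposes $u \in W^p_{\adv, \tr}(\GO)$, i.e.\ that $\tilde{\Gg}_- u \in L^p(\GG_-; |\weight|)$ --- which is precisely what this step is supposed to prove. At that point you only know $u \in W^p_\adv(\GO)$ with local traces. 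The step is repairable (restrict to $w$ compactly supported in a closed subset $\GG_-'$ with $\dist(\GG_+, \GG_-') > 0$, so that only the local trace of $u$ on $\GG_-'$ and the already-identified trace $g$ on $\GG_+$ enter the boundary term, then exhaust $\GG_-$ and conclude by monotone convergence), but as written the justification is circular. Two smaller points: the identification $\tilde{\Gg}_+ u = g$ is not automatic ``by the boundary condition'' but requires the integration-by-parts argument against test functions in $\Lip_{\GG_-}(\ol{\GO})$ together with Proposition \ref{prop:trace_surj}, exactly as in Lemma \ref{lem:Lp_to_Wp}; and when one of $p, q$ equals $\infty$ the ``simultaneous approximation'' needs extra care since $\Lip(\ol{\GO})$ is not dense in the $L^\infty$-based graph space, so the approximation must be carried out in a finite-exponent norm.
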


\begin{remark}
If the surface measure of $\Gamma_-$ is $0$, then the boundary condition is not posed in \eqref{BVP}. Such kind of problems have been discussed extensively; for example, \cite{1986Veiga, GT}. In this case, the above theorems still hold with slight modifications. For example, Theorem \ref{thm:WP_Lp} is modified as follows: Let $1 < p \leq \infty$ and $q$ be its H\"older conjugate. Assume \eqref{reacp}. Then, for any $f \in W^q_{\adv, +}(\GO)'$, there exists a unique weak solution $u$ to the boundary value problem \eqref{BVP} in $L^p(\GO)$. Moreover, we have 
\[
\| u \|_{L^p(\GO)} \leq C_{1, p} \| f \|_{W^q_{\adv, +}(\GO)'},
\]
where $C_{1, p}$ is the constant defined by \eqref{def:C1p}. Theorem \ref{thm:WP_Wp} is modified in the same way.
\end{remark}

We briefly mention recent results on the standard Sobolev regularity of solutions to the boundary value problem \eqref{BVP} with $H^1$ divergence-free vector fields \cite{2017Be, SP} as relevant works.

The rest part of this article is organized as follows. In Section \ref{sec:pre}, we give precise definitions of $\GG_\pm$ and $\GG_0$. We also describe properties of the trace operators. In addition, we introduce the Banach-Ne\v{c}as-Babu\v{s}ka theorem, which plays the key role in our proofs. In Section \ref{sec:WP_Lp}, we discuss well-posedness of the boundary value problem \eqref{BVP} in $L^p(\GO)$ for $1 < p \leq \infty$. In Section \ref{sec:WP_Wp}, we further investigate well-posedness of the boundary value problem \eqref{BVP} in $W^p_\adv(\GO)$ for $1 \leq p \leq \infty$. In Section \ref{sec:sufficient}, we provide a sufficient condition for the relation \eqref{eq:density} which violates the separation condition.

\section{Preliminary} \label{sec:pre}

In this section, we give precise definitions of $\GG_\pm$ and $\GG_0$. We also describe properties of the trace operators. In addition, we introduce the Banach-Ne\v{c}as-Babu\v{s}ka theorem, which plays the key role in our proofs. 

We give definitions of $\GG_\pm$ and $\GG_0$, which are a modification of those in \cite{ABL}.

Since $\adv \in W^{1, \infty}(\GO)^d$, we regard it as a Lipschitz function on $\ol{\GO}$. Moreover, since the domain $\GO$ is bounded and the boundary is Lipschitz, there exists a Lipschitz function $\tilde{\adv}$ on $\Rbb^d$ such that $\tilde{\adv}(x) = \adv(x)$ for all $x \in \ol{\GO}$. For the existence of such an extension, see \cite{Le, Stein} for example.

We take $x_0 \in \p \GO$ and consider the following Cauchy problem:
\begin{equation} \label{Cauchy_boundary}
\begin{cases}
\dfrac{d}{dt} x(t) = \tilde{\adv}(x(t)),\\
x(0) = x_0.
\end{cases}
\end{equation}
Since $\tilde{\adv}$ is Lipschitz, the Cauchy problem \eqref{Cauchy_boundary} has a unique local solution. Let $x(t)$ be the unique solution. We decompose the boundary $\p \GO$ into three parts:
\begin{align*}
\GG_+ &:= \{ x_0 \in \p \GO \mid \mbox{There exists } T_+ > 0 \mbox{ such that } x(t) \notin \ol{\GO} \mbox{ for all } 0< t < T_+. \},\\
\GG_- &:= \{ x_0 \in \p \GO \mid \mbox{There exists } T_- > 0 \mbox{ such that } x(t) \in \GO \mbox{ for all } 0< t < T_-. \},\\
\GG_0 &:= \p \GO \setminus (\GG_+ \cup \GG_-).
\end{align*}
We call these subsets the outflow boundary, the inflow boundary, and the characteristic boundary respectively. By considering the normal component of the dynamics \eqref{Cauchy_boundary}, we see that $\pm \adv(x_0) \cdot n(x_0) > 0$ for a.e. $x_0 \in \GG_\pm$, where $n(x_0)$ is the outward unit normal vector at $x_0 \in \p \GO$. Also, due to continuous dependence of the solution to the Cauchy problem on initial values and due to the assumption that $\GO$ and $\Rbb^d \setminus \ol{\GO}$ are open, we see that $\GG_\pm$ are open. Therefore, our definitions of $\GG_\pm$ and $\GG_0$ are independent of choice of the extension $\tilde{\adv}$.

We proceed to discuss properties of the trace operators. We start from the following estimate.

\begin{lemma} \label{lem:trace_adv} 
Let $1 \leq p < \infty$. Then, we have
\begin{equation} \label{est:Green}
\| u \|_{L^p(\GG_\pm; |\weight|)} \leq C_{2, p} \| u \|_{W^p_\adv(\GO)} + \| u \|_{L^p(\GG_\mp; |\weight|)}
\end{equation}
for all $u \in \Lip(\ol{\GO})$, where $C_{2, p}$ is the constant defined by \eqref{def:C2p}. 
\end{lemma}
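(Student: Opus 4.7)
The plan is to apply the divergence theorem to the Lipschitz vector field $\adv|u|^p$ on $\GO$, expand the divergence by the product and chain rules, and then control the two resulting interior terms by H\"older's and Young's inequalities. Since $u \in \Lip(\ol{\GO})$ is bounded, the composition $|u|^p$ is Lipschitz on $\ol{\GO}$ with $\nabla|u|^p = p|u|^{p-1}\sgn(u)\nabla u$ almost everywhere, so $\adv|u|^p \in W^{1,\infty}(\GO)^d$ and the divergence theorem on the Lipschitz domain $\GO$, combined with $\adv\cdot n = \pm|\weight|$ on $\GG_\pm$ and $\adv\cdot n = 0$ a.e.\ on $\GG_0$, yields
\[
\|u\|_{L^p(\GG_+;|\weight|)}^p - \|u\|_{L^p(\GG_-;|\weight|)}^p = \int_\GO (\divergence\adv)|u|^p\,dx + p\int_\GO |u|^{p-1}\sgn(u)(\adv\cdot\nabla u)\,dx.
\]

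Next I would estimate the two interior integrals. The first is bounded by $\|\divergence\adv\|_{L^\infty(\GO)}\|u\|_{L^p(\GO)}^p \leq \|\adv\|_{W^{1,\infty}(\GO)}\|u\|_{L^p(\GO)}^p$. For the second, H\"older's inequality with exponents $p/(p-1)$ and $p$ followed by Young's inequality $ab \leq \tfrac{p-1}{p}a^{p/(p-1)} + \tfrac{1}{p}b^p$ produces
\[
p\int_\GO |u|^{p-1}|\adv\cdot\nabla u|\,dx \leq (p-1)\|u\|_{L^p(\GO)}^p + \|\adv\cdot\nabla u\|_{L^p(\GO)}^p.
\]
Summing then yields $\|u\|_{L^p(\GG_+;|\weight|)}^p \leq \|u\|_{L^p(\GG_-;|\weight|)}^p + \bigl(p + \|\adv\|_{W^{1,\infty}(\GO)}\bigr)\|u\|_{W^p_\adv(\GO)}^p$.

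To extract the stated constant I would take $p$-th roots in two steps: first the elementary inequality $(A^p+B^p)^{1/p} \leq A+B$ valid for $A,B \geq 0$ and $p \geq 1$ (since $(A+B)^p \geq A^p+B^p$), and then the subadditivity $(s+t)^{1/p} \leq s^{1/p}+t^{1/p}$ of the concave map $t\mapsto t^{1/p}$ on $[0,\infty)$, which converts $(p+\|\adv\|_{W^{1,\infty}(\GO)})^{1/p}$ into $p^{1/p}+\|\adv\|_{W^{1,\infty}(\GO)}^{1/p} = C_{2,p}$. This settles the case with $+$ on the left-hand side of \eqref{est:Green}; the case with $-$ follows by rerunning the argument with $\adv$ replaced by $-\adv$, which interchanges the roles of $\GG_+$ and $\GG_-$ while leaving the right-hand side invariant. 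The main obstacle is essentially bookkeeping --- routing the Young constants and the two root manipulations precisely into $C_{2,p}$ --- together with the minor technical point of justifying the chain rule for $|\cdot|^p$ at points where $u$ vanishes, which is handled by the standard Lipschitz chain rule (or by smoothing $t\mapsto|t|^p$ near the origin and passing to the limit, with the case $p=1$ treated under the convention $|u|^{p-1}\sgn(u)=\sgn(u)$).
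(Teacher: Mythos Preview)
Your proposal is correct and follows essentially the same approach as the paper's proof: both apply the divergence theorem (equivalently, Green's formula) to $\adv|u|^p$, bound the interior terms by H\"older's and Young's inequalities together with $\|\divergence\adv\|_{L^\infty}\leq\|\adv\|_{W^{1,\infty}}$, and then take $p$-th roots to extract $C_{2,p}$. Your treatment is in fact a bit more explicit than the paper's on two minor points --- the chain rule for $|u|^p$ and the two-step root inequality $(A^p+B^p)^{1/p}\leq A+B$ followed by $(s+t)^{1/p}\leq s^{1/p}+t^{1/p}$ --- while the paper handles the $\pm$ cases simultaneously rather than by the substitution $\adv\mapsto-\adv$, but these are cosmetic differences.
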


\begin{proof}
By Green's formula, we have
\begin{equation*}
\int_\GO (\adv \cdot \nabla u) |u|^{p-2} u\,dx = \frac{1}{p} \int_{\GG_+} |u|^p \weight\,d\Gs_x - \frac{1}{p} \int_{\GG_-} |u|^p |\weight|\,d\Gs_x - \frac{1}{p} \int_\GO (\div \adv) |u|^p\,dx
\end{equation*}
for all $u \in \Lip(\ol{\GO})$. Thus, we obtain
\begin{align*}
\int_{\GG_\pm} |u|^p |\weight|\,d\Gs_x \leq p \int_\GO |\adv \cdot \nabla u| |u|^{p-1}\,dx  + \int_\GO |\div \adv| |u|^p\,dx + \int_{\GG_\mp} |u|^p |\weight|\,d\Gs_x. 
\end{align*}

For the first term of the right hand side, by the H\"older inequality and Young's inequality, we have
\[
p \int_\GO |\adv \cdot \nabla u| |u|^{p - 1}\,dx \leq p \| \adv \cdot \nabla u \|_{L^p(\GO)} \| u \|_{L^p(\GO)}^{p-1} \leq \| \adv \cdot \nabla u \|^p + (p - 1) \| u \|_{L^p(\GO)}^{p-1}.
\]
For the second term of the right hand side, we get 
\[
\int_\GO |\div \adv| |u|^p\,dx \leq \| \adv \|_{W^{1, \infty}(\GO)} \| u \|_{L^p(\GO)}^p. 
\]
Thus, we have
\begin{align*}
\| u \|_{L^p(\GG_\pm; |\weight|)}^p \leq& \| \adv \cdot \nabla u \|_{L^p(\GO)}^p + (p - 1) \| u \|_{L^p(\GO)}^p + \| \adv \|_{W^{1, \infty}(\GO)} \| u \|_{L^p(\GO)}^p + \| u \|_{L^p(\GG_\mp; |\weight|)}^p\\
\leq& \left( p + \| \adv \|_{W^{1, \infty}(\GO)} \right) \| u \|_{W^p_\adv(\GO)}^p + \| u \|_{L^p(\GG_\mp; |\weight|)}^p,
\end{align*}
or
\[
\| u \|_{L^p(\GG_\pm; |\weight|)} \leq C_{2, p} \| u \|_{W^p_\adv(\GO)} + \| u \|_{L^p(\GG_\mp; |\weight|)}.
\]
Therefore, Lemma \ref{lem:trace_adv} is proved. 
\end{proof}

As a corollary of Lemma \ref{lem:trace_adv}, we obtain the following estimate.

\begin{prop} \label{prop:trace_adv_loc} 
Let $1 \leq p < \infty$. Also, let $\GG_\pm'$ be closed subsets of $\GG_\pm$ such that $\dist(\GG_\pm, \GG_\mp') > 0$ respectively. Then, there exists a constant $C$ depending on $\GG_\pm'$, $\adv$ and $p$ such that 
\[
\| u \|_{L^p(\GG_\pm'; |\weight|)} \leq C \| u \|_{W^p_\adv(\GO)}
\]
for all $u \in \Lip(\ol{\GO})$.
\end{prop}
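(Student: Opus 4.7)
The plan is to reduce Proposition \ref{prop:trace_adv_loc} to Lemma \ref{lem:trace_adv} by localizing: multiply $u$ by a suitable Lipschitz cutoff that equals $1$ on $\GG_\pm'$ but vanishes on $\GG_\mp$, so that the unwanted boundary term on the right-hand side of \eqref{est:Green} disappears. I will carry out the estimate for $\GG_+'$ in detail; the case $\GG_-'$ is symmetric.

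Set $\delta := \dist(\GG_-, \GG_+') > 0$ and define the cutoff $\Gvf(x) := \min\{ 1, \delta^{-1} \dist(x, \GG_-) \}$. By standard properties of the distance function, $\Gvf \in \Lip(\ol{\GO})$ with $\| \Gvf \|_{L^\infty(\GO)} \leq 1$ and $\| \nabla \Gvf \|_{L^\infty(\GO)} \leq \delta^{-1}$; moreover $\Gvf \equiv 0$ on $\GG_-$ and $\Gvf \equiv 1$ on $\GG_+'$ (since every $x \in \GG_+'$ satisfies $\dist(x, \GG_-) \geq \delta$). For any $u \in \Lip(\ol{\GO})$, the product $\Gvf u$ is again in $\Lip(\ol{\GO})$, so Lemma \ref{lem:trace_adv} applies and gives
\[
\| \Gvf u \|_{L^p(\GG_+; |\weight|)} \leq C_{2, p} \| \Gvf u \|_{W^p_\adv(\GO)} + \| \Gvf u \|_{L^p(\GG_-; |\weight|)}.
\]
The last term vanishes because $\Gvf \equiv 0$ on $\GG_-$, and on the left-hand side we may restrict the integration to $\GG_+'$, where $\Gvf \equiv 1$, yielding
\[
\| u \|_{L^p(\GG_+'; |\weight|)} \leq \| \Gvf u \|_{L^p(\GG_+; |\weight|)} \leq C_{2, p} \| \Gvf u \|_{W^p_\adv(\GO)}.
\]

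It remains to control $\| \Gvf u \|_{W^p_\adv(\GO)}$ by $\| u \|_{W^p_\adv(\GO)}$ with a constant depending only on $\Ga, p, \delta$. Clearly $\| \Gvf u \|_{L^p(\GO)} \leq \| u \|_{L^p(\GO)}$, and the product rule gives $\adv \cdot \nabla(\Gvf u) = \Gvf (\adv \cdot \nabla u) + (\adv \cdot \nabla \Gvf) u$, whence
\[
\| \adv \cdot \nabla (\Gvf u) \|_{L^p(\GO)} \leq \| \adv \cdot \nabla u \|_{L^p(\GO)} + \delta^{-1} \| \adv \|_{L^\infty(\GO)} \| u \|_{L^p(\GO)}.
\]
Combining these gives $\| \Gvf u \|_{W^p_\adv(\GO)} \leq (1 + \delta^{-1} \| \adv \|_{L^\infty(\GO)}) \| u \|_{W^p_\adv(\GO)}$, and the desired estimate follows with $C = C_{2, p}(1 + \delta^{-1} \| \adv \|_{L^\infty(\GO)})$.

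The argument for $\GG_-'$ is entirely analogous, using instead the cutoff $\Gvf(x) := \min\{ 1, \dist(\GG_+, \GG_-')^{-1} \dist(x, \GG_+) \}$, which is $0$ on $\GG_+$ and $1$ on $\GG_-'$. There is no real obstacle here beyond the construction of the cutoff; the only point that required the hypothesis is the strict positivity of $\dist(\GG_\pm, \GG_\mp')$, which is exactly what guarantees that a Lipschitz cutoff separating the relevant pieces of $\p\GO$ exists.
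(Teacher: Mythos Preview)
Your proof is correct and follows essentially the same approach as the paper: multiply by a Lipschitz cutoff that is $1$ on $\GG_\pm'$ and $0$ on $\GG_\mp$, apply Lemma \ref{lem:trace_adv}, and control the $W^p_\adv$-norm of the product via the product rule. The only differences are cosmetic --- you construct the cutoff explicitly from the distance function and track an explicit constant, whereas the paper simply asserts the existence of such a cutoff and writes the constant as $C_{2,p}(1+\|\adv\cdot\nabla\Gy^-\|_{L^\infty(\GO)})$.
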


\begin{proof}
We only discuss the local trace on $\GG_-$ because we can discuss that on $\GG_+$ in the same way. 

Let $\GG_-'$ be a closed subset of $\GG_-$ such that $\dist (\GG_+, \GG_-') > 0$, and let $\Gy^-$ be a function in $\Lip(\ol{\GO})$ such that $0 \leq \Gy^- \leq 1$ on $\ol{\GO}$, $\Gy^- = 0$ on $\GG_+$, and $\Gy^- = 1$ on $\GG_-'$. Also, let $u \in \Lip(\ol{\GO})$. We apply Lemma \ref{lem:trace_adv} to $\Gy^- u \in \Lip(\ol{\GO})$ to obtain
\[
\| \Gy^- u \|_{L^p(\GG_-; |\weight|)} \leq C_{2, p} \| \Gy^- u \|_{W^p_\adv(\GO)},
\]
where $C_{2, p}$ is the constant defined in \eqref{def:C2p}. By the definition of $\Gy^-$, we have $\| u \|_{L^p(\GG_-'; |\weight|)} \leq \| \Gy^- u \|_{L^p(\GG_-; |\weight|)}$ and $\| \Gy^- u \|_{L^p(\GO)} \leq \| u \|_{L^p(\GO)}$. Also, we have
\[
\| \adv \cdot \nabla (\Gy^- u) \|_{L^p(\GO)} \leq \| \adv \cdot \nabla \Gy^- \|_{L^\infty(\GO)} \| u \|_{L^p(\GO)} + \| \adv \cdot \nabla u \|_{L^p(\GO)}.
\]
Hence, we obtain
\begin{align*}
\| u \|_{L^p(\GG_-'; |\weight|)} \leq C_{2, p} (1 + \| \adv \cdot \nabla \Gy^- \|_{L^\infty(\GO)}) \| u \|_{W^p_\adv(\GO)}
\end{align*}
for all $u \in \Lip(\ol{\GO})$. This completes the proof.
\end{proof}

Since choices of $\GG_\pm'$ are arbitrary in Proposition \ref{prop:trace_adv_loc} and $\Lip(\ol{\GO})$ is dense in $W^p_\adv(\GO)$, we may consider the local trace operators $\tilde{\Gg}_\pm: W^p_\adv(\GO) \to L^p_{\loc}(\GG_\pm; |\weight|)$. It is worth mentioning that Proposition \ref{prop:trace_adv} does not imply that the traces $\tilde{\Gg}_\pm: W^p_\adv(\GO) \to L^p(\GG_\pm; |\weight|)$ with $\Dom(\tilde{\Gg}_\pm) = \Lip(\ol{\GO})$ have bounded extensions. Indeed, the following example shows that it is not possible.

\begin{example} \label{ex:unbounded}
Let $\GO := \{ (x_1, x_2) \in \Rbb^2 \mid |x_1| < x_2, 0 < x_2 < 1 \}$ and $\adv(x) := (1, 0)$. Also, let 
\[
u_m(x) := 
\begin{cases}
m^\alpha (1 - m x_2)^2, &0 \leq x_2 < 1/m,\\
0, &\mbox{otherwise},
\end{cases}
\]
where $\alpha$ is a positive number. Then, each $u_m$ belongs to $\Lip(\ol{\GO})$ and
\[
\| u_m \|_{W^p_\adv(\GO)}^p = \| u_m \|_{L^p(\GO)}^p = \frac{2}{(2p+1)(2p+2)} m^{p \alpha - 2}.
\]
Also, since $d\Gs_x = \sqrt{2} dx_2$ and $\weight = 1/\sqrt{2}$ on $\GG_+$, we have
\[
\| u_m \|_{L^p(\GG_+; \weight)}^p = \frac{1}{2p+1} m^{p\alpha - 1}. 
\] 
For fixed $p$ with $1 \leq p < \infty$, we take $\alpha$ so that $1/p < \alpha < 2/p$. Then, we have $\| u_m \|_{W^p_\adv(\GO)} \to 0$ while $\| u_m \|_{L^p(\GG_+; \weight)} \to \infty$ as $m \to \infty$. We notice that $\| u_m \|_{L^p(\GG_+'; \weight)} \to 0$ as $m \to \infty$ for any closed subset $\GG_+'$ in $\GG_+$. We remark that this example is essentially the same as the one in \cite[Remark 2.6]{DE}.

\begin{figure}[h]

\centering

\begin{tikzpicture}[scale=1.5]
	\draw[thick, ->] (-3, 0) --(3, 0);
	\draw[thick, ->] (0, -1) --(0, 3);
	\draw[thick, ->] (-0.5, 1) --(0.5, 1);
    \draw[ultra thick] (0,0) --(2,2) --(-2,2) --(0, 0);
    \draw[dashed] (2,0) --(2, 2);
    \draw[dashed] (-2, 0) --(-2, 2); 
    \node at (-0.2,1.8) {1};
    \node at (2,-0.2) {1};
    \node at (-2, -0.2) {-1};
    \node at (-0.2,-0.2) {O};
    \node [below] at (3,0) {$x$};
    \node [left] at (0,3) {$y$};
    \node at (1, 0.5) {$\GG_+$};
    \node at (-1, 0.5) {$\GG_-$};
    \node at (-0.3, 1.2) {$\adv$};
\end{tikzpicture}

\caption{The domain $\GO$ in Example \ref{ex:unbounded}}

\end{figure}

\end{example}

In order to discuss the boundary condition, we restrict the definition domains of the operators $\tilde{\Gg}_\pm$. We define trace operators $\Gg_\pm: W^p_\adv(\GO) \to L^p(\GG_\pm; |\weight|)$ with $\Dom(\Gg_\pm) = W^p_{\adv, \tr}(\GO)$ by
\[
\Gg_\pm u := \tilde{\Gg}_\pm u, \quad u \in W^p_{\adv, \tr}(\GO).
\]
We introduce the following property for $\Gg_\pm$.
 
\begin{lemma} \label{lem:trace_closed} 
For $1 \leq p < \infty$, the operators $\Gg_\pm: W^p_\adv(\GO) \to L^p(\GG_\pm; |\weight|)$ with $\Dom(\Gg_\pm) = W^p_{\adv, \tr}(\GO)$ are closed.
\end{lemma}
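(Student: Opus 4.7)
The plan is to unfold the definition of a closed operator and combine two convergences: the strong convergence in $W^p_\adv(\GO)$ forces local $L^p$ convergence of traces via the continuous local trace operator, while the assumed convergence in $L^p(\GG_\pm; |\weight|)$ is a fortiori a local convergence. Matching the two local limits then pins down the trace of the limit function.

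More concretely, suppose $\{u_n\} \subset W^p_{\adv, \tr}(\GO)$ satisfies $u_n \to u$ in $W^p_\adv(\GO)$ and $\Gg_\pm u_n \to h$ in $L^p(\GG_\pm; |\weight|)$; I need to show $u \in W^p_{\adv, \tr}(\GO)$ and $\Gg_\pm u = h$. First I fix an arbitrary closed subset $\GG_\pm'$ of $\GG_\pm$ with $\dist(\GG_\pm', \GG_\mp) > 0$. By Proposition \ref{prop:trace_adv_loc} (applied on $\Lip(\ol{\GO})$ and extended by density to $W^p_\adv(\GO)$), the local trace operator $\tilde{\Gg}_\pm \colon W^p_\adv(\GO) \to L^p(\GG_\pm'; |\weight|)$ is bounded, so $\tilde{\Gg}_\pm u_n \to \tilde{\Gg}_\pm u$ in $L^p(\GG_\pm'; |\weight|)$. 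On the other hand, since $u_n \in W^p_{\adv, \tr}(\GO)$ we have $\Gg_\pm u_n = \tilde{\Gg}_\pm u_n$, and the hypothesis $\Gg_\pm u_n \to h$ in $L^p(\GG_\pm; |\weight|)$ restricts to convergence in $L^p(\GG_\pm'; |\weight|)$. Uniqueness of limits in $L^p(\GG_\pm'; |\weight|)$ therefore yields $\tilde{\Gg}_\pm u = h$ almost everywhere on $\GG_\pm'$.

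Next I choose an exhausting sequence $\GG_\pm^{(k)}$ of such closed subsets with $\bigcup_k \GG_\pm^{(k)}$ of full measure in $\GG_\pm$, which is possible because $\GG_\pm$ is open in $\p\GO$ and $\dist(\GG_\pm, \GG_\mp)$ is attained on a set of measure zero; concretely one can take $\GG_\pm^{(k)} := \{x \in \GG_\pm \mid \dist(x, \GG_\mp) \geq 1/k\}$. The identities on each $\GG_\pm^{(k)}$ assemble to $\tilde{\Gg}_\pm u = h$ a.e.\ on $\GG_\pm$. Since $h \in L^p(\GG_\pm; |\weight|)$, this forces $\tilde{\Gg}_\pm u \in L^p(\GG_\pm; |\weight|)$, i.e.\ $u \in W^p_{\adv, \tr}(\GO) = \Dom(\Gg_\pm)$, and by definition $\Gg_\pm u = \tilde{\Gg}_\pm u = h$, proving closedness.

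The only delicate point is the measure-theoretic assembly: one must check that the exhaustion $\{\GG_\pm^{(k)}\}$ covers $\GG_\pm$ up to a null set with respect to the measure $|\weight|\,d\Gs_x$, but this is immediate from the fact that $|\weight| = 0$ on $\GG_0 \supset \p\GG_\pm \setminus \GG_\pm$ (where the two boundaries meet) by the definition of $\GG_0$. The rest is bookkeeping, and no new inequalities beyond Proposition \ref{prop:trace_adv_loc} are needed.
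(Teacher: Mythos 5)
Your argument is correct and is essentially the proof given in the paper: both fix an arbitrary closed subset $\GG_\pm'$ at positive distance from $\GG_\mp$, use Proposition \ref{prop:trace_adv_loc} plus the triangle inequality to identify $\tilde{\Gg}_\pm u$ with the limit $h$ on $\GG_\pm'$, and then let $\GG_\pm'$ exhaust $\GG_\pm$. Your explicit exhaustion $\GG_\pm^{(k)}$ merely spells out the step the paper compresses into ``since $\GG_\pm'$ are arbitrary,'' and the covering claim holds because $\GG_\pm$ is relatively open and disjoint from $\ol{\GG_\mp}$ (no appeal to $|\weight|=0$ on $\GG_0$ is actually needed).
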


\begin{proof}
Let $\{ u_m \}_{m \in \Nbb}$ be a sequence in $W^p_{\adv, \tr}(\GO)$ such that $u_m \to u$ in $W^p_\adv(\GO)$ and we assume that sequences $\{ \Gg_\pm u_m \}_{m \in \Nbb}$ converge to functions $g_\pm$ in $L^p(\GG_\pm; |\weight|)$ respectively. We show that $u \in W^p_{\adv, \tr}(\GO)$ and $g_\pm = \tilde{\Gg}_\pm u$.

Let $\GG_\pm'$ be closed subsets of $\GG_\pm$ such that $\dist(\GG_\pm, \GG_\mp') > 0$ respectively. Then, by Proposition \ref{prop:trace_adv_loc}, we have 
\begin{align*}
\| g_\pm - \tilde{\Gg}_\pm u \|_{L^p(\GG_\pm'; |\weight|)} \leq& \| g_\pm - \tilde{\Gg}_\pm u_m \|_{L^p(\GG_\pm'; |\weight|)} + \| \tilde{\Gg}_\pm u_m - \tilde{\Gg}_\pm u \|_{L^p(\GG_\pm'; |\weight|)}\\
\leq& \| g_\pm - \Gg_\pm u_m \|_{L^p(\GG_\pm; |\weight|)} + C \| u_m - u \|_{W^p_\adv(\GO)} 
\end{align*}
with some positive constant $C$. Taking the limit $m \to \infty$, we obtain $\| g - \tilde{\Gg}_\pm u \|_{L^p(\GG_\pm'; |\weight|)} = 0$. Since $\GG_\pm'$ are arbitrary, we conclude that $\| g_\pm - \tilde{\Gg}_\pm u \|_{L^p(\GG_\pm; |\weight|)} = 0$, which implies that $g_\pm = \tilde{\Gg}_\pm u$. This completes the proof. 
\end{proof}

The estimate \eqref{est:Green} in Lemma \ref{lem:trace_adv} holds for all $u \in W^p_{\adv, \tr}(\GO)$ with $1 \leq p < \infty$ under the assumption \eqref{eq:density}. Moreover, we obtain the following estimate. 

\begin{prop} \label{prop:trace_adv} 
Let $1 \leq p < \infty$. Then, for a function $u \in W^p_{\adv, \pm}(\GO)$, we have
\begin{equation} \label{est:trace_Lp}
\| u \|_{L^p(\GG_\mp; |\weight|)} \leq C_{2, p} \| u \|_{W^p_\adv(\GO)},
\end{equation}
where $C_{2, p}$ is the constant defined by \eqref{def:C2p}.
\end{prop}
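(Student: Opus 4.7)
The plan is to extend the inequality \eqref{est:Green} of Lemma~\ref{lem:trace_adv} from $\Lip(\ol{\GO})$ to the whole space $W^p_{\adv, \tr}(\GO)$ using the density assumption \eqref{eq:density}, and then apply the resulting estimate to a function whose trace on $\GG_\pm$ vanishes; the last term on the right-hand side of \eqref{est:Green} then disappears, leaving exactly \eqref{est:trace_Lp}. In other words, the proposition is a direct corollary of Lemma~\ref{lem:trace_adv} once the Lipschitz estimate is promoted to the graph space $W^p_{\adv, \tr}(\GO)$.

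Concretely, I would fix $u \in W^p_{\adv, \pm}(\GO) \subset W^p_{\adv, \tr}(\GO)$ and invoke \eqref{eq:density} to select a sequence $\{u_m\}_{m \in \Nbb} \subset \Lip(\ol{\GO})$ converging to $u$ in the graph norm $\|\cdot\|_{W^p_{\adv, \tr}(\GO)}$. By the very definition of that norm, one gets simultaneously that $u_m \to u$ in $W^p_\adv(\GO)$ and that $\tilde{\Gg}_+ u_m \to \tilde{\Gg}_+ u$ in $L^p(\GG_+; \weight)$ and $\tilde{\Gg}_- u_m \to \tilde{\Gg}_- u$ in $L^p(\GG_-; |\weight|)$; note that for the Lipschitz approximant $u_m$ the trace $\tilde{\Gg}_\pm u_m$ coincides with the pointwise restriction $u_m|_{\GG_\pm}$. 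Applying Lemma~\ref{lem:trace_adv} to each $u_m$ yields
\[
\| u_m \|_{L^p(\GG_\mp; |\weight|)} \leq C_{2, p} \| u_m \|_{W^p_\adv(\GO)} + \| u_m \|_{L^p(\GG_\pm; |\weight|)}.
\]
Sending $m \to \infty$, the left-hand side tends to $\| \tilde{\Gg}_\mp u \|_{L^p(\GG_\mp; |\weight|)} = \| u \|_{L^p(\GG_\mp; |\weight|)}$, the first term on the right tends to $C_{2, p} \| u \|_{W^p_\adv(\GO)}$, and the second term on the right tends to $\| \tilde{\Gg}_\pm u \|_{L^p(\GG_\pm; |\weight|)} = 0$ since $u \in W^p_{\adv, \pm}(\GO)$. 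This establishes \eqref{est:trace_Lp}.

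There is essentially no serious obstacle here: the argument is a two-line density/limit passage. The one point requiring attention is that the Lipschitz approximation must be carried out in the \emph{graph} norm $\|\cdot\|_{W^p_{\adv, \tr}(\GO)}$ rather than in the weaker norm $\|\cdot\|_{W^p_\adv(\GO)}$, since only the former controls the boundary terms that appear on the right-hand side of \eqref{est:Green}. This is precisely the role played by the hypothesis \eqref{eq:density}, and also why the same argument simultaneously justifies the remark preceding the proposition that \eqref{est:Green} itself extends to all of $W^p_{\adv, \tr}(\GO)$.
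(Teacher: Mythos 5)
Your proposal is correct and is exactly the argument the paper intends: the paper gives no separate proof, but the sentence immediately preceding the proposition states that \eqref{est:Green} extends to all of $W^p_{\adv, \tr}(\GO)$ under the density assumption \eqref{eq:density}, which is precisely your graph-norm approximation and limit passage, after which the vanishing of $\tilde{\Gg}_\pm u$ for $u \in W^p_{\adv, \pm}(\GO)$ kills the last term. Your observation that the approximation must be in the graph norm rather than in $\| \cdot \|_{W^p_\adv(\GO)}$ is the right point of emphasis and matches the role the paper assigns to \eqref{eq:density}.
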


Recalling the definition of the norm $\| \cdot \|_{W^p_{\adv, \tr}(\GO)}$, we obtain the following estimate, which shows equivalence of two norms $\| \cdot \|_{W^p_{\adv, \tr}(\GO)}$ and $\| \cdot \|_{W^p_\adv(\GO)}$ on $W^p_{\adv, \pm}(\GO)$.

\begin{cor} \label{cor:trace_adv} 
Let $1 \leq p < \infty$. Then, for a function $u \in W^p_{\adv, \pm}(\GO)$, we have
\[
\| u \|_{W^p_{\adv, \tr}(\GO)} \leq (1 + C_{2, p}) \| u \|_{W^p_\adv(\GO)},
\]
where $C_{2, p}$ is the constant defined by \eqref{def:C2p}. 
\end{cor}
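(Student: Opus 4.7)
The plan is to derive this immediately from Proposition~\ref{prop:trace_adv} together with the definition of the graph norm. The only technical point is to convert an $\ell^p$-style sum of $p$-th powers into the claimed $\ell^1$-style bound with constant $1 + C_{2,p}$.

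First I would fix $u \in W^p_{\adv, \pm}(\GO)$, so that by definition $\tilde{\Gg}_\pm u = 0$ in $L^p(\GG_\pm; |\weight|)$. Then Proposition~\ref{prop:trace_adv} gives the bound
\[
\| \tilde{\Gg}_\mp u \|_{L^p(\GG_\mp; |\weight|)} \leq C_{2, p} \| u \|_{W^p_\adv(\GO)}
\]
for the only surviving trace. Plugging this into the definition of the graph norm, exactly one of the two boundary $p$-th power terms vanishes, so
\[
\| u \|_{W^p_{\adv, \tr}(\GO)}^p = \| u \|_{W^p_\adv(\GO)}^p + \| \tilde{\Gg}_\mp u \|_{L^p(\GG_\mp; |\weight|)}^p \leq \bigl( 1 + C_{2, p}^p \bigr) \| u \|_{W^p_\adv(\GO)}^p.
\]

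Finally I would apply the elementary inequality $(a^p + b^p)^{1/p} \leq a + b$ for nonnegative reals $a, b$ and $p \geq 1$ (which follows from monotonicity of $\ell^p$ norms in $p$, or from concavity arguments) with $a = 1$ and $b = C_{2,p}$ to deduce
\[
\| u \|_{W^p_{\adv, \tr}(\GO)} \leq \bigl( 1 + C_{2, p}^p \bigr)^{1/p} \| u \|_{W^p_\adv(\GO)} \leq (1 + C_{2, p}) \| u \|_{W^p_\adv(\GO)},
\]
which is the desired estimate. There is no substantial obstacle here; the statement is essentially a repackaging of Proposition~\ref{prop:trace_adv} with the observation that on $W^p_{\adv, \pm}(\GO)$ one of the two trace contributions to $\| \cdot \|_{W^p_{\adv, \tr}(\GO)}$ is zero.
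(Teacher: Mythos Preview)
Your proof is correct and is essentially the same as the paper's: the paper states the corollary as an immediate consequence of Proposition~\ref{prop:trace_adv} and the definition of the graph norm $\| \cdot \|_{W^p_{\adv, \tr}(\GO)}$, without writing out details. Your additional step of passing from $(1 + C_{2,p}^p)^{1/p}$ to $1 + C_{2,p}$ via the elementary inequality $(a^p + b^p)^{1/p} \leq a + b$ is exactly what is needed to make the implicit argument explicit.
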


Let us mention surjectivity of the traces. Let
\[
\Lip_c(\GG_\pm) := \{ g \mid \mbox{Lipschitz continuous on } \GG_\pm, \, \supp g \subset \GG_\pm \}
\]
and
\[
\Lip_{\GG_\pm}(\ol{\GO}) := \{ u \in \Lip(\ol{\GO}) \mid u|_{\GG_\pm} = 0 \}.
\]
Then, the following surjectivity is known in \cite{C}. 

\begin{prop} \label{prop:trace_surj} 
For any $g_\pm \in \Lip_c(\GG_\pm)$, there exist functions $u_\mp \in \Lip_{\GG_\mp}(\ol{\GO})$ such that $\Gg_\pm u_\mp = g_\pm$.
\end{prop}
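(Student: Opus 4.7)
The plan is to construct $u_\mp$ as the product of a Lipschitz extension of $g_\pm$ with a Lipschitz cutoff that vanishes near $\GG_\mp$. I present the construction for $g_- \in \Lip_c(\GG_-)$; the case $g_+ \in \Lip_c(\GG_+)$ is symmetric. First I would establish the geometric facts
\[
\delta_0 := \dist(\supp g_-, \ol{\GG_+}) > 0 \qquad \text{and} \qquad \delta_1 := \dist(\supp g_-, \p\GO \setminus \GG_-) > 0.
\]
Positivity of $\delta_1$ is immediate from compactness of $K := \supp g_-$ in the open set $\GG_-$. For $\delta_0$, I would first argue $\GG_- \cap \ol{\GG_+} = \emptyset$: any $x \in \GG_-$ has a neighborhood in $\p\GO$ contained in $\GG_-$ and hence disjoint from $\GG_+$, so $x \notin \ol{\GG_+}$; then $K$ is compact, $\ol{\GG_+}$ is closed in $\Rbb^d$, and the two are disjoint, which gives $\delta_0 > 0$.

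Next, extend $g_-$ by zero to a function $\bar g_-$ on $\p\GO$ and check it is Lipschitz: the only non-trivial pair is $(x,y) \in K \times (\p\GO \setminus \GG_-)$, for which
\[
|\bar g_-(x) - \bar g_-(y)| = |g_-(x)| \leq \frac{\|g_-\|_{L^\infty(\GG_-)}}{\delta_1}\,|x-y|.
\]
Apply the McShane--Whitney extension to obtain a Lipschitz $\tilde g_- : \Rbb^d \to \Rbb$ with $\tilde g_-|_{\p\GO} = \bar g_-$. Then set
\[
\eta(x) := \min\{1,\, \dist(x, \ol{\GG_+}) / \delta_0\}, \qquad x \in \ol{\GO},
\]
which is Lipschitz on $\ol{\GO}$, vanishes on $\ol{\GG_+} \supset \GG_+$, and equals $1$ on $K$ by the choice of $\delta_0$. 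Define $u_+ := \eta\, \tilde g_- \in \Lip(\ol{\GO})$. On $\GG_+$, $\eta = 0$ forces $u_+ = 0$, so $u_+ \in \Lip_{\GG_+}(\ol{\GO})$; on $K$, $\eta = 1$ and $\tilde g_- = g_-$ give $u_+ = g_-$; on $\GG_- \setminus K$, $\tilde g_- = 0 = g_-$ give $u_+ = g_-$ as well. Hence $u_+|_{\GG_-} = g_-$, and because $u_+ \in \Lip(\ol{\GO})$ its trace $\Gg_- u_+$ is the pointwise restriction, so $\Gg_- u_+ = g_-$.

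The main obstacle is a bookkeeping subtlety on $\GG_- \setminus K$ rather than any serious analysis: a direct McShane extension of $g_-|_K$ need not vanish there, so the product $\eta\, \tilde g_-$ might fail to recover $g_- \equiv 0$ on $\GG_- \setminus K$. The preliminary extension by zero across $\p \GG_-$---which is legitimate precisely because the separation $\delta_1 > 0$ comes from the compact support---is what guarantees $\tilde g_-$ vanishes on $\GG_- \setminus K$ and hence that $u_+|_{\GG_-}$ equals $g_-$ on all of $\GG_-$.
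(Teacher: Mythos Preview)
Your argument is correct and rests on the same key observation the paper invokes: because $\supp g_\pm$ is compact in the open set $\GG_\pm$, it lies at positive distance from $\GG_\mp$ (indeed from $\ol{\GG_\mp}$), so a Lipschitz cutoff separating the two can be built even without the global separation condition. The paper does not give a self-contained proof but refers to \cite{C}, noting only that one can take an open covering of $\supp g_\pm$ disjoint from $\GG_\mp$; your construction differs cosmetically in that you use a single global McShane--Whitney extension followed by one cutoff rather than a covering/partition-of-unity scheme, which is if anything more direct.
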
 

We note that the proof in \cite{C} works without the separation condition because we can take an open covering of $\supp g_\pm$ such that it does not intersect with $\GG_\mp$.

\begin{remark} \label{rem:inclusion_density}
By identifying $\Lip(\ol{\GO})$ and $W^{1, \infty}(\GO)$, we see that $\Lip_{\GG_\pm}(\ol{\GO}) \subset W^p_{\adv, \pm}(\GO)$ for all $1 \leq p < \infty$, and that $\Lip_c(\GG_\pm)$ are dense in $L^p(\GG_\pm; |\weight|)$ for all $1 \leq p < \infty$ respectively.
\end{remark}

At the end of this section, we introduce the Banach-Ne\v{c}as-Babu\v{s}ka theorem, which will be used in the following sections in order to prove the well-posedness. For the detail, see \cite{2017S}.

\begin{prop}[the Banach-Ne\v{c}as-Babu\v{s}ka theorem] \label{prop:BNB} 
Let $V$ and $W$ be Banach spaces over $\Rbb$ and suppose that $W$ is reflexive. Also, let $a$ be a bounded bilinear form on $V \times W$. Then, the following three conditions are equivalent to each other:
\begin{enumerate}
\item For any $F \in W'$, there exists a unique $u \in V$ such that $a(u, w) = F(w)$ for all $w \in W$.

\item The following two conditions hold:
\begin{align*}
&c_1 := \inf_{v \in V \sm \{ 0 \}} \sup_{w \in W \sm \{ 0 \}} \frac{a(v, w)}{\| v \|_V \| w \|_W} > 0,\\
&\{ w \in W \mid a(v, w) = 0 \mbox{ for all } v \in V \} = \{ 0 \}.
\end{align*}

\item The following two conditions hold:
\begin{align*}
&c_1 := \inf_{v \in V \sm \{ 0 \}} \sup_{w \in W \sm \{ 0 \}} \frac{a(v, w)}{\| v \|_V \| w \|_W} > 0,\\
&c_2 := \inf_{w \in W \sm \{ 0 \}} \sup_{v \in V \sm \{ 0 \}} \frac{a(v, w)}{\| v \|_V \| w \|_W} > 0.
\end{align*}
\end{enumerate}
Moreover, the estimate $\| u \|_V \leq c_1^{-1} \| F \|_{W'}$ holds for $u$ in the condition 1.
\end{prop}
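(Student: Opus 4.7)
The plan is to recast the problem via two bounded operators: $A \colon V \to W'$ defined by $\langle A v, w \rangle_{W', W} := a(v, w)$ and $B \colon W \to V'$ defined by $\langle B w, v \rangle_{V', V} := a(v, w)$. Both have operator norm at most $\| a \|$. From the definition of the dual norm,
\[
\sup_{w \in W \sm \{ 0 \}} \frac{a(v, w)}{\| w \|_W} = \| A v \|_{W'}, \qquad \sup_{v \in V \sm \{ 0 \}} \frac{a(v, w)}{\| v \|_V} = \| B w \|_{V'},
\]
so $c_1 > 0$ is equivalent to $\| A v \|_{W'} \ge c_1 \| v \|_V$ for all $v \in V$, and $c_2 > 0$ is equivalent to $\| B w \|_{V'} \ge c_2 \| w \|_W$ for all $w \in W$. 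Via the canonical embedding $J_W \colon W \to W''$, which is an isomorphism since $W$ is reflexive, $B$ coincides with the Banach-space adjoint $A^*$. Condition 1 is then the bijectivity of $A$, and the announced bound $\| u \|_V \le c_1^{-1} \| F \|_{W'}$ is exactly $\| A^{-1} \| \le c_1^{-1}$.

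The core step is 2 $\Rightarrow$ 1. The inf-sup condition forces $A$ to be bounded below, hence injective with closed range $R(A) \subseteq W'$. If $R(A) \neq W'$, Hahn-Banach yields a nonzero $\Phi \in W''$ vanishing on $R(A)$; reflexivity provides $w \in W \sm \{ 0 \}$ with $\Phi = J_W w$, so that $a(v, w) = \Phi(A v) = 0$ for all $v \in V$, contradicting the annihilator condition. Hence $A$ is surjective, therefore bijective, and for $F \in W'$ the unique preimage $u = A^{-1} F$ satisfies $\| u \|_V \le c_1^{-1} \| A u \|_{W'} = c_1^{-1} \| F \|_{W'}$.

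For 1 $\Rightarrow$ 3, bijectivity of $A$ plus the open mapping theorem yield $\| A^{-1} \| < \infty$, whence $A$ is bounded below; dualizing, the adjoint $A^*$ is likewise bijective and bounded below, and via the reflexive identification $W \cong W''$ the operator $B$ inherits the same lower bound, giving $c_2 > 0$. For 3 $\Rightarrow$ 2, the lower bound $\| B w \|_{V'} \ge c_2 \| w \|_W$ trivially implies injectivity of $B$, which is precisely the annihilator condition. The main obstacle is the single essential use of reflexivity of $W$ in 2 $\Rightarrow$ 1, where one must convert an abstract Hahn-Banach functional in $W''$ into a concrete test element $w \in W$; the remaining implications only require duality bookkeeping together with the open mapping theorem.
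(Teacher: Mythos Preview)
Your argument is correct and is the standard operator-theoretic proof of the Banach--Ne\v{c}as--Babu\v{s}ka theorem. Note, however, that the paper does not supply its own proof of this proposition: it is stated as a known tool and the reader is referred to \cite{2017S} for details. So there is no ``paper's proof'' to compare against beyond the fact that your approach---encoding $a$ as $A\colon V\to W'$ and its transpose $B=A^*J_W\colon W\to V'$, then combining the closed-range/Hahn--Banach argument with reflexivity of $W$---is exactly the line of reasoning one finds in the cited reference and in standard treatments such as \cite{2021EG2}. One minor remark: in the step $1\Rightarrow 3$ you invoke that bijectivity of $A$ implies bijectivity of $A^*$; this is indeed immediate from $(A^{-1})^*=(A^*)^{-1}$, but it may be worth stating explicitly, since it is the only place where something beyond the open mapping theorem is used in that direction.
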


\begin{remark} \label{rem:equal}
In fact, if the condition 3 holds, then $c_1 = c_2$ \cite[Lemma C.53]{2021EG2}. 
\end{remark}

\section{$L^p$ well-posedness for $1 < p \leq \infty$} \label{sec:WP_Lp}

In this section, we give a proof of Theorem \ref{thm:WP_Lp}. Namely, we show well-posedness of the boundary value problem \eqref{BVP} in $L^p(\GO)$ for $1 < p \leq \infty$.

We first consider the case $1 < p < \infty$. Let $q$ be the H\"older conjugate of $p$. We consider the adjoint boundary value problem:
\beq \label{BVP_ad0}
\begin{cases}
- \nabla \cdot (\adv u) + \reac u = f &\mbox{ in } \GO,\\
u = 0 &\mbox{ on } \GG_+,
\end{cases}
\eeq
where $f \in L^q(\GO)$. If the surface measure of $\GG_+$ is $0$, then we ignore the boundary condition in \eqref{BVP_ad0}. 

\begin{lemma} \label{lem:WP_ad}
Let $1 < p < \infty$ and assume \eqref{reacp}. Then, the boundary value problem \eqref{BVP_ad0} has a unique strong solution in $W^q_{\adv, +}(\GO)$. Moreover, we have
$$
\| u \|_{W^q_\adv(\GO)} \leq C_{1, p} \| f \|_{L^q(\GO)}
$$
for all $f \in L^q(\GO)$, where $u$ is the solution to \eqref{BVP_ad0} and $C_{1, p}$ is the constant defined by \eqref{def:C1p}.
\end{lemma}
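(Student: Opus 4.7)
The plan is to apply the Banach--Ne\v{c}as--Babu\v{s}ka theorem (Proposition \ref{prop:BNB}) with $V := W^q_{\adv, +}(\GO)$, reflexive $W := L^p(\GO)$, and bilinear form
\[
a(u, v) := \int_\GO \bigl(-\nabla \cdot (\adv u) + \reac u\bigr) v \, dx, \qquad u \in V, \; v \in W.
\]
Since $Lu := -\nabla \cdot (\adv u) + \reac u$ maps $W^q_\adv(\GO)$ into $L^q(\GO)$, this $a$ is bounded on $V \times W$; identifying $f \in L^q(\GO)$ with the functional $F(v) := \int_\GO f v \, dx$, one has $\|F\|_{W'} = \|f\|_{L^q(\GO)}$, so the conclusions of the lemma reduce to verifying the two inf-sup conditions in the third item of Proposition \ref{prop:BNB}.

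For the first inf-sup condition, given $u \in V \setminus \{0\}$ I would test with $v := |u|^{q-2} u \in L^p(\GO)$, which satisfies $\|v\|_{L^p(\GO)} = \|u\|_{L^q(\GO)}^{q-1}$ because $(q-1) p = q$. Extending Green's formula from $\Lip(\ol{\GO})$ to $W^q_{\adv, \tr}(\GO)$ via the density hypothesis \eqref{eq:density} and using $\tilde{\Gg}_+ u = 0$, the same calculation as in the proof of Lemma \ref{lem:trace_adv} gives
\[
a(u, v) = \int_\GO \bigl(\reac - \tfrac{1}{p} \div \adv\bigr) |u|^q \, dx + \tfrac{1}{q} \int_{\GG_-} |\weight| \, |u|^q \, d\Gs_x \geq \reac_p \|u\|_{L^q(\GO)}^q,
\]
hence $\sup_{w \in W \setminus \{0\}} a(u, w)/\|w\|_W \geq \reac_p \|u\|_{L^q(\GO)}$. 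I would then upgrade to the full $W^q_\adv$-norm by combining the identity $\adv \cdot \nabla u = -Lu + (\reac - \div \adv) u$ with $L^q$--$L^p$ duality to obtain
\[
\|\adv \cdot \nabla u\|_{L^q(\GO)} \leq \sup_w \frac{a(u, w)}{\|w\|_W} + \bigl(\|\reac\|_{L^\infty(\GO)} + \|\adv\|_{W^{1,\infty}(\GO)}\bigr) \|u\|_{L^q(\GO)}.
\]
Together with $\|u\|_{W^q_\adv} \leq \|u\|_{L^q} + \|\adv \cdot \nabla u\|_{L^q}$ this yields $c_1 \geq \reac_p / \bigl(1 + \reac_p + \|\reac\|_{L^\infty(\GO)} + \|\adv\|_{W^{1,\infty}(\GO)}\bigr)$, whose reciprocal is dominated by the $C_{1,p}$ of \eqref{def:C1p}.

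For the second condition, suppose $v \in W$ satisfies $a(u, v) = 0$ for every $u \in V$. Testing against $u \in C_c^\infty(\GO) \subset V$ first shows that $\adv \cdot \nabla v + \reac v = 0$ as a distribution, so $v \in W^p_\adv(\GO)$. Then for any $g \in \Lip_c(\GG_-)$, Proposition \ref{prop:trace_surj} supplies $u \in \Lip_{\GG_+}(\ol{\GO}) \subset V$ with $\Gg_- u = g$; Green's formula, justified by Proposition \ref{prop:trace_adv_loc} (only local traces of $v$ on the compact support of $g$ are needed, while $\weight = 0$ on $\GG_0$), reduces $0 = a(u, v)$ to $\int_{\GG_-} |\weight| g \, \tilde{\Gg}_- v \, d\Gs_x = 0$, and Remark \ref{rem:inclusion_density} then forces $\tilde{\Gg}_- v = 0$. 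Finally, I would test $\adv \cdot \nabla v + \reac v = 0$ against the bounded truncations $|T_M(v)|^{p-2} T_M(v)$ with $T_M(t) := \sgn(t) \min(|t|, M)$: since $T_M(v) \in L^\infty(\GO) \cap W^p_{\adv, \tr}(\GO)$ with $\tilde{\Gg}_- T_M(v) = 0$, the renormalization $\adv \cdot \nabla T_M(v) = \chi_{\{|v| \leq M\}} \adv \cdot \nabla v$ and Green's formula (via \eqref{eq:density}) give
\[
\int_{\GG_+} \weight |T_M(v)|^p \, d\Gs_x = \int_\GO (\div \adv) |T_M(v)|^p \, dx - p \int_{\{|v| \leq M\}} \reac \, |v|^p \, dx;
\]
letting $M \to \infty$ by dominated convergence, the right-hand side tends to $-p \int_\GO \bigl(\reac - \tfrac{1}{p} \div \adv\bigr) |v|^p \, dx \leq -p \reac_p \|v\|_{L^p(\GO)}^p$, while the left-hand side stays nonnegative, so $v = 0$ by \eqref{reacp}. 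The hardest step is this last one: without the separation condition, $v$ has only local traces on $\GG_+$, so $|v|^{p-2} v$ is not directly an admissible test function, and the bounded truncation is precisely what closes the argument under the sole density assumption \eqref{eq:density}.
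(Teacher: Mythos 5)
Your proposal is correct and follows the paper's proof almost step for step: the same Banach--Ne\v{c}as--Babu\v{s}ka setup with $V=W^q_{\adv,+}(\GO)$ and $W=L^p(\GO)$, the same test function $|u|^{q-2}u$ and Green's formula for the inf--sup bound, the same recovery of $\|\adv\cdot\nabla u\|_{L^q(\GO)}$ from the identity $\adv\cdot\nabla u=-L'u+(\reac-\div\adv)u$, and the same reduction of the injectivity condition to $\adv\cdot\nabla v+\reac v=0$ followed by the trace-surjectivity argument giving $\tilde{\Gg}_-v=0$. The one place you genuinely diverge is the final step: the paper concludes directly from $\tilde{\Gg}_-v=0$ that $v\in W^p_{\adv,-}(\GO)$ and then tests with $|v|^{p-2}v$, which tacitly assumes $\tilde{\Gg}_+v\in L^p(\GG_+;\weight)$ so that Green's formula (extended via \eqref{eq:density}) applies; your truncation $T_M(v)$ manufactures a test function that is automatically in $W^p_{\adv,\tr}(\GO)$ (its local trace on $\GG_+$ is bounded, hence globally $p$-integrable) and recovers the same sign conclusion by monotone/dominated convergence. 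This buys you a cleaner justification of the last Green's formula without the separation condition, at the price of invoking the chain rule $\adv\cdot\nabla T_M(v)=\chi_{\{|v|\le M\}}\adv\cdot\nabla v$ and $\tilde{\Gg}_\pm T_M(v)=T_M(\tilde{\Gg}_\pm v)$ — facts at the same level of rigor as the chain rule for $|u|^{q-2}u$ that both you and the paper already use in the inf--sup step. Your inf--sup constant $\reac_p/(1+\reac_p+\|\adv\|_{W^{1,\infty}(\GO)}+\|\reac\|_{L^\infty(\GO)})$ is at least $C_{1,p}^{-1}$, so the stated estimate follows; note only that the paper keeps the extra factor $(1+C_{2,p})$ because it later needs the bound in the full $\|\cdot\|_{W^q_{\adv,\tr}(\GO)}$ norm via Corollary \ref{cor:trace_adv}, not just in $\|\cdot\|_{W^q_\adv(\GO)}$.
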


\begin{proof}
We define the operator $L': W^q_{\adv, +}(\GO) \to L^q(\GO)$ by
\[
L' u := -\nabla \cdot (\adv u) + \reac u.
\]
It is worth mentioning that
\begin{align*}
\| L' u \|_{L^q(\GO)} \leq& \| (\div \adv) u \|_{L^q(\GO)} + \| \adv \cdot \nabla u \|_{L^q(\GO)} + \| \reac u \|_{L^q(\GO)}\\ 
\leq& \left( 1 + \| \adv \|_{W^{1, \infty}(\GO)} + \| \reac \|_{L^\infty(\GO)} \right) \| u \|_{W^q_\adv(\GO)},
\end{align*}
or the operator $L'$ is bounded.

We define a bilinear form $B'$ on $W^q_{\adv, +}(\GO) \times L^p(\GO)$ by
\[
B'(u, v) := \int_\GO (L' u) v\,dx, \quad u \in W^q_{\adv, +}(\GO), v \in L^p(\GO).
\] 
By the H\"older inequality, we have
\begin{align*}
|B'(u, v)| \leq \| L' u \|_{L^q(\GO)} \| v \|_{L^p(\GO)} \leq \left( 1 + \| \adv \|_{W^{1, \infty}(\GO)} + \| \reac \|_{L^\infty(\GO)} \right) \| u \|_{W^q_\adv(\GO)} \| v \|_{L^p(\GO)},
\end{align*}
and thus the bilinear form $B'$ is also bounded. 

We also define a functional $F_0: L^p(\GO) \to \Rbb$ by
\[
F_0(v) := \la f, v \ra_{L^q(\GO), L^p(\GO)} = \int_\GO f v\,dx, \quad v \in L^p(\GO).
\]
By the H\"older inequality, it is bounded. Thus, the functional $F_0$ belongs to $L^p(\GO)'$.

We apply Proposition \ref{prop:BNB} with $V = W^q_{\adv, +}(\GO)$ and $W = L^p(\GO)$ in order to show the existence of a function $u \in W^q_{\adv, +}(\GO)$ such that
\[
B'(u, v) = F_0(v)
\]
for all $v \in L^p(\GO)$. This function $u$ is the solution to the boundary value problem \eqref{BVP_ad0}.

To this aim, we first show that 
\beq \label{inf-sup}
\inf_{u \in W^q_{\adv, +}(\GO) \setminus \{0\}} \sup_{v \in L^p(\GO) \setminus \{0\}} \frac{B'(u, v)}{\| u \|_{W^q_\adv(\GO)} \| v \|_{L^p(\GO)}} \geq C_{1, p}^{-1}.
\eeq
For $u \in W^q_{\adv, +}(\GO)$, let $v := |u|^{q-2} u$. We notice that 
\[
\int_\GO |v|^p\,dx = \int_\GO |u|^q\,dx,
\]
or $\| v \|_{L^p(\GO)} = \| u \|_{L^q(\GO)}^{q - 1}$ since $p = q/(q - 1)$. Moreover, Green's formula yields
\begin{align*}
B'(u, v) =& \int_\GO (- \nabla \cdot (\adv u) + \reac u) |u|^{q-2} u\,dx\\
=& \int_\GO \left( \reac - \frac{1}{p} \div \adv \right) |u|^q\,dx + \frac{1}{q} \int_{\GG_-} |u|^q |\weight|\,d\Gs_x\\ 
\geq& \reac_p \| u \|_{L^q(\GO)}^q.
\end{align*}
Thus, we have
\[
\| u \|_{L^q(\GO)}^q \leq \reac_p^{-1} B'(u, v) \leq \reac_p^{-1} \| u \|_{L^q(\GO)}^{q - 1} \sup_{v \in L^p(\GO) \setminus \{ 0 \}} \frac{B'(u, v)}{\| v \|_{L^p(\GO)}},
\]
or
\[
\| u \|_{L^q(\GO)} \leq \reac_p^{-1} \sup_{v \in L^p(\GO) \setminus \{ 0 \}} \frac{B'(u, v)}{\| v \|_{L^p(\GO)}}.
\]
For the derivative term, we have
\begin{align*}
\| \adv \cdot \nabla u \|_{L^q(\GO)} =& \sup_{v \in L^p(\GO) \setminus \{ 0 \}} \frac{\la \adv \cdot \nabla u, v \ra_{L^q(\GO), L^p(\GO)}}{\| v \|_{L^p(\GO)}}\\
=& \sup_{v \in L^p(\GO) \setminus \{ 0 \}} \frac{-B'(u, v) - \la (\div \adv)u, v \ra_{L^q(\GO), L^p(\GO)} + \la \reac u, v \ra_{L^q(\GO), L^p(\GO)}}{\| v \|_{L^p(\GO)}}\\
\leq& \left( 1 + \reac_p^{-1} (\| \adv \|_{W^{1, \infty}(\GO)} + \| \reac \|_{L^\infty(\GO)} ) \right) \sup_{v \in L^p(\GO) \setminus \{ 0 \}} \frac{B'(u, v)}{\| v \|_{L^p(\GO)}}.
\end{align*}
Therefore, by Corollary \ref{cor:trace_adv}, we obtain
\[
\| u \|_{W^q_{\adv, \tr}(\GO)} \leq (1 + C_{2, p}) \| u \|_{W^q_\adv(\GO)} \leq C_{1, p} \sup_{v \in L^p(\GO) \setminus \{ 0 \}} \frac{B'(u, v)}{\| v \|_{L^p(\GO)}},
\]
which implies the estimate \eqref{inf-sup}.

We next show that that $B'(u, v) = 0$ for all $u \in W^q_{\adv, +}(\GO)$ implies that $v = 0$. Since $W^{1, q}_0(\GO) \subset W^q_{\adv, +}(\GO)$, this assumption implies that the function $v$ satisfies $\adv \cdot \nabla v + \reac v = 0$ in $W^{-1, p}(\GO)$. Thus, we have $\adv \cdot \nabla v = - \reac v$ and the directional derivative $\adv \cdot \nabla v$ belongs to $L^p(\GO)$. In other words, we have $v \in W^p_\adv(\GO)$. This fact allows us to take the trace $\tilde{\Gg}_- v$ in $L^p_{\loc}(\GG_-; |\weight|)$. 

We take a closed subset $\GG_-' \subset \GG_-$ such that $\dist(\GG_+, \GG_-') > 0$ and take $u \in W^q_{\adv, +}(\GO)$ such that $\supp (\Gg_- u) \subset \GG_-'$. Then, we may do integration by parts to obtain
\[
\la L' u, v \ra_{L^q(\GO), L^p(\GO)} = \la u, Lv \ra_{L^q(\GO), L^p(\GO)} + \int_{\GG_-'} u v |\weight|\,d\Gs_x,
\]
where $Lv := \adv \cdot \nabla v + \reac v$. Since $Lv = 0$, the above formula implies
\[
\int_{\GG_-'} u v |\weight|\,d\Gs_x = 0,
\]
and, by Proposition \ref{prop:trace_surj} with Remark \ref{rem:inclusion_density}, we see that $\tilde{\Gg}_- v = 0$, which implies that $v \in W^p_{\adv, -}(\GO)$. 

Therefore, we have
\begin{align*}
0 =& \int_\GO (\adv \cdot \nabla v + \reac v) |v|^{p - 2} v\,dx\\
=& \int_\GO \left( \reac - \frac{1}{p} \div \adv \right) |v|^p\,dx + \frac{1}{p} \int_{\GG_+} |v|^q \weight\,d\Gs_x\\
\geq& \reac_p \| v \|_{L^p(\GO)}^p,
\end{align*}
which means that $v = 0$.

The conclusion follows from Proposition \ref{prop:BNB} and the fact that $\| F_0 \|_{L^p(\GO)'} = \| f \|_{L^q(\GO)}$.
\end{proof} 

We are ready to prove Theorem \ref{thm:WP_Lp} for $1 < p < \infty$. Corresponding to the equation \eqref{BVP_weak}, we let
\[
F(v) := \la f, v \ra_{W^q_{\adv, +}(\GO)', W^q_{\adv, +}(\GO)} + \int_{\GG_-} g v |\weight|\,d\Gs_x, \quad v \in W^q_{\adv, +}(\GO).
\]
Since
\[
\left| \la f, v \ra_{W^q_{\adv, +}(\GO)', W^q_{\adv, +}(\GO)} \right| \leq \| f \|_{W^q_{\adv, +}(\GO)'} \| v \|_{W^q_{\adv, \tr}(\GO)} 
\]
and 
\[
\left| \int_{\GG_-} g v |\weight| \,d\Gs_x \right| \leq \| g \|_{L^p(\GG_-; |\weight|)} \| v \|_{L^q(\GG_-; |\weight|)} \leq \| g \|_{L^p(\GG_-; |\weight|)} \| v \|_{W^q_{\adv, \tr}(\GO)},
\]
we have $F \in W^q_{\adv, +}(\GO)'$ and
\[
\| F \|_{W^p_{\adv, +}(\GO)'} \leq \| f \|_{W^q_{\adv, +}(\GO)'} + \| g \|_{L^p(\GG_-; |\weight|)}.
\]

We define the operator $\ol{L}: L^p(\GO) \to W^q_{\adv, +}(\GO)'$ by
\[
\la \ol{L} u, v \ra_{W^q_{\adv, +}(\GO)', W^q_{\adv, +}(\GO)} := \la u, L' v \ra_{L^p(\GO), L^q(\GO)},
\]
and define the bilinear form $\ol{B}$ on $L^p(\GO) \times W^q_{\adv, +}(\GO)$ by
\[
\ol{B}(u, v) := \la \ol{L} u, v \ra_{W^q_{\adv, +}(\GO)', W^q_{\adv, +}(\GO)}.
\]
It is worth mentioning that $\ol{B}(u, v) = B'(v, u)$ for all $u \in L^p(\GO)$ and $v \in W^q_{\adv, +}(\GO)$. The equation \eqref{BVP_weak} is rewritten as
\[
\ol{B}(u, v) = F(v)
\]
for all $v \in W^q_{\adv, +}(\GO)$.

Lemma \ref{lem:WP_ad} and Proposition \ref{prop:BNB} with Remark \ref{rem:equal} imply that 
\[
\inf_{v \in W^q_{\adv, +}(\GO) \setminus \{ 0 \}} \sup_{u \in L^p(\GO) \setminus \{ 0 \}} \frac{\ol{B}(u, v)}{\| u \|_{L^p(\GO)} \| v \|_{W^q_\adv(\GO)}} \geq C_{1, p}^{-1}
\]
and
\[
\inf_{u \in L^p(\GO) \setminus \{ 0 \}} \sup_{v \in W^q_{\adv, +}(\GO) \setminus \{ 0 \}} \frac{\ol{B}(u, v)}{\| u \|_{L^p(\GO)} \| v \|_{W^q_\adv(\GO)}} \geq C_{1, p}^{-1}.
\]
We apply Proposition \ref{prop:BNB} again in order to conclude that
\[
\| u \|_{L^p(\GO)} \leq C_{1, p} \| F \|_{W^p_{\adv, +}(\GO)'} \leq C_{1, p} \left( \| f \|_{W^q_{\adv, +}(\GO)'} + \| g \|_{L^p(\GG_-; |\weight|)} \right).
\]
This completes the proof for $1 < p < \infty$.

We proceed  to the case $p = \infty$. We note that the same approach is seen in \cite{SP}. Let $p_\infty := (2 \| \adv \|_{W^{1, \infty}(\GO)}) / \reac_\infty$. Then, we get
\[
\reac_p \geq \reac_\infty - \frac{1}{p_\infty} \| \adv \|_{W^{1, \infty}(\GO)} = \frac{\reac_\infty}{2} > 0
\]
for all $p > p_\infty$. Also, since
\[
\reac_\infty - \frac{1}{p} \| \adv \|_{W^{1, \infty}(\GO)} \leq \reac_p \leq \reac_\infty + \frac{1}{p} \| \adv \|_{W^{1, \infty}(\GO)},
\]
we have
\begin{align*}
C_{1, p} \leq& \left( 1 + p^{\frac{1}{p}} + \| \adv \|_{W^{1, \infty}(\GO)}^{\frac{1}{p}} \right) \left( \reac_\infty - \frac{1}{p} \| \adv \|_{W^{1, \infty}(\GO)} \right)^{-1}\\
&\times \left( 1 + \reac_\infty + \frac{1}{p} \| \adv \|_{W^{1, \infty}(\GO)} + \| \reac \|_{L^\infty(\GO)} \right).
\end{align*}
for all $p > p_\infty$. Thus, we obtain
\[
\limsup_{p \to \infty} C_{1, p} \leq C_{1, \infty},
\]
where $C_{1, \infty}$ is the constant defined in \eqref{def:C1_infty}.

Let $f \in W^1_{\adv, +}(\GO)'$. Since $W^q_{\adv, +}(\GO) \subset W^1_{\adv, +}(\GO)$ for all $q > 1$, we can define $f_q \in W^q_{\adv, +}(\GO)'$ by
\[
\la f_q, v \ra_{W^q_{\adv, +}(\GO)', W^q_{\adv, +}(\GO)} := \la f, v \ra_{W^1_{\adv, +}(\GO)', W^1_{\adv, +}(\GO)}, \quad v \in W^q_{\adv, +}(\GO).
\]
Then, we have
\begin{align*}
|\la f_q, v \ra_{W^q_{\adv, +}(\GO)', W^q_{\adv, +}(\GO)}| \leq& \| f \|_{W^1_{\adv, +}(\GO)'} \| v \|_{W^1_{\adv, \tr}(\GO)}\\ 
\leq& \left( |\GO|^{\frac{1}{p}} + \| \adv \|_{W^{1, \infty}(\GO)}^{\frac{1}{p}} |\GG_-|^{\frac{1}{p}} \right) \| f \|_{W^1_{\adv, +}(\GO)'} \| v \|_{W^q_{\adv, +}(\GO)},
\end{align*}
and hence, we obtain
\[
\| f_q \|_{W^1_{\adv, +}(\GO)'} \leq \left( |\GO|^{\frac{1}{p}} + \| \adv \|_{W^{1, \infty}(\GO)}^{\frac{1}{p}} |\GG_-|^{\frac{1}{p}} \right) \| f \|_{W^1_{\adv, +}(\GO)'}. 
\]

Since $g \in L^\infty(\GG_-; |\weight|)$, we have
\[
\| g \|_{L^p(\GG_-; |\weight|)} \leq \| \adv \|_{W^{1, \infty}(\GO)}^{\frac{1}{p}} |\GG_-|^{\frac{1}{p}} \| g \|_{L^\infty(\GG_-; |\weight|)}.
\]
Regarding $f = f_q$ for $q > 1$ and applying Theorem \ref{thm:WP_Lp} for $p_\infty < p < \infty$, we obtain
\begin{align*}
\| u \|_{L^p(\GO)} \leq& C_{1, p} \left( |\GO|^{\frac{1}{p}} \| f \|_{W^1_{\adv, +}(\GO)'} + \| \adv \|_{W^{1, \infty}(\GO)}^{\frac{1}{p}} |\GG_-|^{\frac{1}{p}} \| g \|_{L^\infty(\GG_-)} \right)
\end{align*}
for all $p > p_\infty$. Taking $p \to \infty$, we have
\[
\| u \|_{L^\infty(\GO)} \leq C_{1, \infty} \left( \| f \|_{W^1_{\adv, +}(\GO)'} + \| g \|_{L^\infty(\GG_-)} \right).
\]
This completes the proof for $p = \infty$.

Therefore, Theorem \ref{thm:WP_Lp} is proved.

\section{$W^{1, p}$ well-posedness for $1 \leq p \leq \infty$} \label{sec:WP_Wp}

In this section, we prove Theorem \ref{thm:WP_Wp}. In other words, we discuss well-posedness of the boundary value problem \eqref{BVP} in $W^p_\adv(\GO)$ for $1 \leq p \leq \infty$.

We first discuss the case $1 < p < \infty$.

\begin{lemma} \label{lem:Lp_to_Wp}
Let $1 < p < \infty$ and assume \eqref{reacp}. Then, for any $f \in L^p(\GO)$ and $g \in L^p(\GG_-; |\weight|)$, the weak solution $u \in L^p(\GO)$ to the boundary value problem \eqref{BVP} belongs to $W^p_\adv(\GO)$. Moreover, we have $u|_{\GG_-} = g$ and 
\[
\| u \|_{W^p_\adv(\GO)} \leq \{ 1 + \left( 1 + \| \reac \|_{L^\infty(\GO)} \right) C_{1, p} \} \left( \| f \|_{L^p(\GO)} + \| g \|_{L^p(\GG_-; |\weight|)} \right),
\]
where $C_{1, p}$ is the constant defined by \eqref{def:C1p}.
\end{lemma}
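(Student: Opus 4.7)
The plan is to upgrade the $L^p$ solution from Theorem~\ref{thm:WP_Lp} in two steps: first recover the PDE inside $\GO$ by a distributional test, and then identify the trace on $\GG_-$ via a Green's-formula plus approximation argument. For the interior step, I would test \eqref{BVP_weak} against $v \in C^\infty_c(\GO) \subset W^q_{\adv,+}(\GO)$. Since $f \in L^p(\GO)$ the dual pairing collapses to $\int_\GO f v\,dx$, and the boundary integral vanishes, so the equation reads $\adv \cdot \nabla u + \reac u = f$ in $\Dcal'(\GO)$. Because $f$ and $\reac u$ lie in $L^p(\GO)$, we deduce $\adv \cdot \nabla u \in L^p(\GO)$ together with the pointwise bound $\| \adv \cdot \nabla u \|_{L^p(\GO)} \leq \| f \|_{L^p(\GO)} + \| \reac \|_{L^\infty(\GO)} \| u \|_{L^p(\GO)}$, and hence $u \in W^p_\adv(\GO)$.

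For the trace step, I would show $\tilde{\Gg}_- u = g$ by a duality argument. Take an arbitrary $\varphi \in \Lip_c(\GG_-)$; by Proposition~\ref{prop:trace_surj} there exists $w \in \Lip_{\GG_+}(\ol{\GO}) \subset W^q_{\adv,+}(\GO)$ with $\Gg_- w = \varphi$. Plugging $w$ into \eqref{BVP_weak} gives one identity. A second identity is obtained by approximating $u$ by $u_n \in \Lip(\ol{\GO})$ converging in $W^p_\adv(\GO)$ (the density of Lipschitz functions in $W^p_\adv(\GO)$ cited in the paper) and applying the classical Green's formula to each pair $(u_n, w)$; using $w = 0$ on $\GG_+$ and $\adv \cdot n = 0$ on $\GG_0$, this reads
\[
\int_\GO u_n [-\nabla \cdot (\adv w) + \reac w]\,dx = \int_\GO w [\adv \cdot \nabla u_n + \reac u_n]\,dx + \int_{\GG_-} u_n \varphi |\weight|\,d\Gs_x.
\]
The interior integrals converge by the $W^p_\adv$ convergence; the boundary integral converges because, choosing a closed $\GG_-' \supset \supp \varphi$ with $\dist(\GG_+, \GG_-') > 0$, Proposition~\ref{prop:trace_adv_loc} gives $u_n|_{\GG_-'} \to \tilde{\Gg}_- u|_{\GG_-'}$ in $L^p(\GG_-'; |\weight|)$. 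Subtracting the two identities and cancelling the interior contributions via the PDE from the first step leaves
\[
\int_{\GG_-} (\tilde{\Gg}_- u - g)\,\varphi\,|\weight|\,d\Gs_x = 0 \quad \text{for every } \varphi \in \Lip_c(\GG_-).
\]
Since $\Lip_c(\GG_-)$ is dense in $L^q$ on every compact subset of $\GG_-$ (Remark~\ref{rem:inclusion_density}) and $\tilde{\Gg}_- u - g \in L^p_{\loc}(\GG_-; |\weight|)$ on any such subset, we conclude $\tilde{\Gg}_- u = g$ a.e.\ on $\GG_-$; in particular $\tilde{\Gg}_- u \in L^p(\GG_-; |\weight|)$.

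The quantitative estimate is then immediate: combining $\|u\|_{W^p_\adv(\GO)} \leq \|u\|_{L^p(\GO)} + \|\adv \cdot \nabla u\|_{L^p(\GO)}$ with the first-step bound gives $\|u\|_{W^p_\adv(\GO)} \leq \|f\|_{L^p(\GO)} + (1 + \|\reac\|_{L^\infty(\GO)})\|u\|_{L^p(\GO)}$, and inserting Theorem~\ref{thm:WP_Lp} together with the trivial estimate $\|f\|_{W^q_{\adv,+}(\GO)'} \leq \|f\|_{L^p(\GO)}$ (from H\"older and $\|v\|_{L^q(\GO)} \leq \|v\|_{W^q_\adv(\GO)}$) yields the prefactor $1 + (1 + \|\reac\|_{L^\infty(\GO)}) C_{1,p}$. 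I expect the only real obstacle to be the trace identification: because $u$ has no a priori global trace in $L^p(\GG_-; |\weight|)$, the passage to the limit in the boundary integral must be done on a closed subset away from $\GG_+$, and it is precisely here that the local-trace estimate Proposition~\ref{prop:trace_adv_loc} and the lift $w$ vanishing on $\GG_+$ from Proposition~\ref{prop:trace_surj}, designed to bypass the separation condition, are used.
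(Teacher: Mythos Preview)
Your proposal is correct and follows essentially the same route as the paper: recover the PDE by testing against compactly supported functions (the paper refers back to the proof of Lemma~\ref{lem:WP_ad}, which uses $W^{1,q}_0(\GO)\subset W^q_{\adv,+}(\GO)$ for the same purpose), and then identify the trace by pairing with a lift $w\in\Lip_{\GG_+}(\ol{\GO})$ of a $\varphi\in\Lip_c(\GG_-)$, using integration by parts localized on a closed $\GG_-'$ with $\dist(\GG_+,\GG_-')>0$ together with Proposition~\ref{prop:trace_surj}. The only cosmetic difference is that you spell out the density/approximation step justifying the Green's identity for $u\in W^p_\adv(\GO)$ via Proposition~\ref{prop:trace_adv_loc}, whereas the paper states the integration-by-parts formula directly.
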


\begin{proof}
Let $u \in L^p(\GO)$ be the solution to the equation \eqref{BVP_weak}. As in the proof of Lemma \ref{lem:WP_ad}, we can say that 
\beq \label{eq:SAE}
\adv \cdot \nabla u = f - \reac u \in L^p(\GO).
\eeq
Thus, the solution $u$ belongs to $W^p_\adv(\GO)$. Moreover, we have 
\begin{align*}
\| \adv \cdot \nabla u \|_{L^p(\GO)} \leq \| f \|_{L^p(\GO)} + \| \reac u \|_{L^p(\GO)} \leq \| f \|_{L^p(\GO)} + \| \reac \|_{L^\infty(\GO)} \| u \|_{L^p(\GO)}.
\end{align*}
Thus, by Theorem \ref{thm:WP_Lp}, we obtain
\begin{align*}
\| u \|_{W^p_\adv(\GO)} \leq& \| f \|_{L^p(\GO)} + (1 + \| \reac \|_{L^\infty(\GO)}) \| u \|_{L^p(\GO)}\\ 
\leq& \{ 1 + \left( 1 + \| \reac \|_{L^\infty(\GO)} \right) C_{1, p} \} \left( \| f \|_{L^p(\GO)} + \| g \|_{L^p(\GG_-; |\weight|)} \right).
\end{align*}

We check that the function $u$ has the trace on $\GG_-$ and it is equal to the boundary data $g$. Let $\GG_-'$ be a closed subset of $\GG_-$ such that $\dist(\GG_+, \GG_-') > 0$, and let $v$ be a function in $\Lip_{\GG_+}(\GO)$ such that $\supp v|_{\GG_-} \subset \GG_-'$. Then, integration by parts yields
\[
\int_\GO u \left( -\nabla \cdot (\adv v) \right)\,dx = \int_{\GG_-'} u v |\weight|\,d\Gs_x + \int_\GO (\adv \cdot u) v\,dx.
\]
Applying the above identity to the equation \eqref{BVP_weak}, thanks to \eqref{eq:SAE}, we have
\[
\int_{\GG_-'} uv |\weight|\,d\Gs_x = \int_{\GG_-'} g v |\weight|\,d\Gs_x
\]
for all $v \in \Lip_{\GG_+}(\GO)$ such that $\supp v|_{\GG_-} \subset \GG_-'$. By Proposition \ref{prop:trace_surj}, we conclude that
\[
\tilde{\Gg}_- u = g \in L^p(\GG_-; |\weight|). 
\]

This completes the proof.
\end{proof}

We derive the estimate in Theorem \ref{thm:WP_Wp}. Let $u \in W^p_{\adv, \tr}(\GO)$ be the strong solution to the boundary value problem \eqref{BVP}. By Lemma \ref{lem:trace_adv}, we see that
\[
\| u \|_{L^p(\GG_+; \weight)} \leq C_{2, p} \| u \|_{W^p_\adv(\GO)} + \| u \|_{L^p(\GG_-; |\weight|)}.
\]
Hence, we have
\begin{align*}
\| u \|_{W^p_{\adv, \tr}(\GO)} \leq& (1 + C_{2, p}) \| u \|_{W^p_\adv(\GO)} + \| g \|_{L^p(\GG_-; |\weight|)}\\
\leq& C_{1, p}' \left( \| f \|_{L^p(\GO)} + \| g \|_{L^p(\GG_-; |\weight|)} \right),
\end{align*}
which is the estimate in Theorem \ref{thm:WP_Wp}.

We can treat the case $p = \infty$ in the same way as the proof of the $L^\infty$ well-posedness in Section \ref{sec:WP_Lp}. In what follows, we discuss the case $p = 1$.

We first show existence of a solution in $W^1_\adv(\GO)$. For $f \in L^1(\GO)$ and $N \in \Nbb$, let $f_N := \max \{ -N, \min \{ N, f \} \}$, namely,
\begin{equation*} 
f_N(x) =
\begin{cases}
f(x), &|f(x)| \leq N,\\
N, &f(x) > N,\\
-N, &f(x) < -N.
\end{cases}
\end{equation*}
In the same way, for $g \in L^1(\GG_-; |\weight|)$, let $g_N := \max \{ -N, \min \{ N, g \} \}$. It is worth mentioning that $f_N \in L^\infty(\GO)$ and $g_N \in L^\infty(\GG_-)$ for each $N \in \Nbb$, and that
\[
\| f - f_N \|_{L^1(\GO)} \to 0, \quad \| g - g_N \|_{L^1(\GG_-; |\weight|)} \to 0
\]
as $N \to \infty$ by the dominated convergence theorem.

Assume \eqref{reacp} with $p = 1$ and let
\[
p_1 := 1 + \frac{\reac_1}{2 \| \adv \|_{W^{1, \infty}(\GO)}}.
\]
Then, we have $p_1 > 1$ and
\[
\reac_p \geq \reac_1 - (p_1 - 1) \| \adv \|_{W^{1, \infty}(\GO)} = \frac{\reac_1}{2} > 0
\]
for all $1 < p < p_1$. Since
\[
\reac_1 - \frac{p - 1}{p} \| \adv \|_{W^{1, \infty}(\GO)} \leq \reac_p \leq \reac_1 + \frac{p - 1}{p} \| \adv \|_{W^{1, \infty}(\GO)},
\]
we have
\begin{align*}
C_{1, p} \leq& \left( 1 + p^{\frac{1}{p}} + \| \adv \|_{W^{1, \infty}(\GO)}^{\frac{1}{p}} \right) \left( \reac_1 - \frac{p-1}{p} \| \adv \|_{W^{1, \infty}(\GO)} \right)^{-1}\\
&\times \left( 1 + \reac_1 + \frac{p - 1}{p} \| \adv \|_{W^{1, \infty(\GO)}} + \| \reac \|_{L^\infty(\GO)} \right) 
\end{align*}
for all $1 < p < p_1$. Thus, we obtain
\[
\limsup_{p \to 1} C_{1, p} \leq C_{1, 1},
\]
where
\[
C_{1, 1} := \left( 2 + \| \adv \|_{W^{1, \infty}(\GO)} \right) \reac_1^{-1} \left( 1 + \reac_1 + \| \adv \|_{W^{1, \infty(\GO)}} + \| \reac \|_{L^\infty(\GO)} \right).
\]

Since
\[
\| f_N \|_{L^p(\GO)} \leq N^{\frac{p - 1}{p}} \| f_N \|_{L^1(\GO)}^\frac{1}{p}, \quad \| g_N \|_{L^p(\GG_-; |\weight|)} \leq N^{\frac{p - 1}{p}} \| g_N \|_{L^1(\GG_-; |\weight|)}^\frac{1}{p},
\]
we have
\begin{align*}
\| u_N \|_{W_\adv^p(\GO)} \leq& \left( p C_{1, 1} + (p - 1) C_{1, 1}'' \right) N^{\frac{p - 1}{p}} \left( \| f_N \|_{L^1(\GO)}^{\frac{1}{p}} + \| g_N \|_{L^1(\GG_-; |\weight|)}^{\frac{1}{p}} \right)
\end{align*}
for all $1 < p < p_1$, where $u_N$ is the solution to the boundary value problem \eqref{BVP} with $f = f_N$ and $g = g_N$. Taking $p \to 1$, we get
\begin{equation} \label{est:Cauchy_1}
\| u_N \|_{W_\adv^1(\GO)} \leq C_{1, 1} (\| f_N \|_{L^1(\GO)} + \| g_N \|_{L^1(\GG_-; |\weight|)}).
\end{equation}
Since $\{ f_N \}$ and $\{ g_N \}$ are Cauchy sequences in $L^1(\GO)$ and $L^1(\GG_-; |\weight|)$ respectively, the estimate \eqref{est:Cauchy_1} implies that $\{ u_N \}$ is a Cauchy sequence in $W^1_\adv(\GO)$. Thus, there exists a function $u \in W^1_\adv(\GO)$ such that $u_N \to u$ in $W^1_\adv(\GO)$ as $N \to \infty$.

As the next step, we prove that the function $u$ is a solution to the boundary value problem \eqref{BVP}. Since $u_N$ satisfies $\adv \cdot \nabla u_N + \reac u_N = f_N$, we have
\begin{align*}
\| \adv \cdot \nabla u + \reac u - f \|_{L^1(\GO)} \leq& \| \adv \cdot \nabla (u - u_N) \|_{L^1(\GO)} + \| \reac (u - u_N) \|_{L^1(\GO)} + \| f - f_N \|_{L^1(\GO)}\\
\leq& \left( 1 + \| \reac \|_{L^\infty(\GO)} \right) \| u - u_N \|_{W^1_\adv(\GO)} + \| f - f_N \|_{L^1(\GO)}
\end{align*}
for all $N \in \Nbb$. Thus, by taking $N \to \infty$, we conclude that $\adv \cdot \nabla u + \reac u - f = 0$.

Now we take a closed subset $\GG_-' \subset \GG_-$ such that $\dist(\GG_+, \GG_-') > 0$ and take $\Gc \in \Lip(\ol{\GO})$ such that $\supp \Gc|_{\p \GO} \subset \GG_-'$. Then, we have
\begin{align*}
\left| \int_{\GG_-'} \Gc (u - g) \weight\,d\Gs_x \right| \leq& \left| \int_{\GG_-'} \Gc (u - u_N) \weight\,d\Gs_x \right| + \left| \int_{\GG_-'} \Gc (g_N - g) \weight\,d\Gs_x \right|\\
\leq& \left| \int_\GO \Gc \adv \cdot \nabla (u - u_N)\,dx \right| + \left| \int_\GO (\div (\Gc \adv)) (u - u_N)\,dx \right|\\ 
&+ \left| \int_{\GG_-'} \Gc (g_N - g) \weight\,d\Gs_x \right|\\
\leq& \left( \| \Gc \|_{L^\infty(\GO)} + \| \adv \cdot \nabla \Gc \|_{L^\infty(\GO)} + \| \Gc \|_{L^\infty(\GO)} \| \adv \|_{W^{1, \infty}(\GO)} \right)\\
&\times \| u - u_N \|_{W^1_\adv(\GO)} + \| \Gc \|_{L^\infty(\GO)} \| g - g_N \|_{L^1(\GG_-; |\weight|)}
\end{align*}
for all $N \in \Nbb$. Thus, by taking $N \to \infty$, we have
\[
\int_{\GG_-'} \Gc (u - g) \weight\,d\Gs_x = 0.
\]
Since $\Gc$ and $\GG_-'$ are arbitrary, we conclude that $u = g$ a.e. on $\GG_-$. Therefore, the function $u$ is the solution to the boundary value problem \eqref{BVP}.

Finally, we discuss uniqueness of solutions to the boundary value problem \eqref{BVP} in $W^1_\adv(\GO)$ following \cite{BCJK}. Let $u_1$ and $u_2$ be two solutions to the boundary value problem \eqref{BVP} in $W^1_\adv(\GO)$. Then, the difference $\tilde{u} := u_1 - u_2$ belongs to $W^1_{\adv, -}(\GO)$ and solves the following boundary value problem: 
\beq \label{BVP_uniq}
\begin{cases}
\adv \cdot \nabla \tilde{u} + \reac \tilde{u} = 0 &\mbox{ in } \GO,\\
\tilde{u} = 0 &\mbox{ on } \GG_-.
\end{cases}
\eeq
By the definition of $W^1_{\adv, -}(\GO)$, for any $\Ge > 0$, there exists a function $\tilde{u}^\Ge \in \Lip(\ol{\GO})$ such that $\| \tilde{u} - \tilde{u}^\Ge \|_{W^1_\adv(\GO)} < \Ge$ and $\| \tilde{u}^\Ge \|_{L^1(\GG_-; |\weight|)} < \Ge$. Also, let $f^\Ge := \adv \cdot \nabla \tilde{u}^\Ge + \reac \tilde{u}^\Ge$. Noting that $\adv \cdot \nabla \tilde{u} + \reac \tilde{u} = 0$, we have
\[
\| f^\Ge \|_{L^1(\GO)} \leq \|\adv \cdot \nabla (\tilde{u}^\Ge - \tilde{u}) \|_{L^1(\GO)} + \| \reac (\tilde{u}^\Ge - \tilde{u}) \|_{L^1(\GO)} \leq \left( 1 + \| \reac \|_{L^\infty(\GO)} \right) \| \tilde{u}^\Ge - \tilde{u} \|_{W^1_\adv(\GO)}.
\]
Thus, $f^\Ge \to 0$ in $L^1(\GO)$ as $\Ge \to 0$.

Let $\GO_\pm^\Ge := \{ x \in \GO \mid \pm \tilde{u}^\Ge(x) > 0 \}$. Since $\tilde{u}^\Ge \in \Lip(\ol{\GO})$, we have
\begin{align*}
\int_{\GO_\pm^\Ge} f^\Ge\,dx =& \int_{\GO_\pm^\Ge} (\adv \cdot \nabla \tilde{u}^\Ge + \reac \tilde{u}^\Ge)\,dx\\ 
=& \int_{\GO_\pm^\Ge} (\reac - \div \adv) \tilde{u}^\Ge\,dx + \int_{{\GG_+} \cap \ol{\GO_\pm^\Ge}} u^\Ge \weight\,d\Gs_x + \int_{{\GG_-} \cap \ol{\GO_\pm^\Ge}} u^\Ge \weight\,d\Gs_x.
\end{align*}
Therefore, we have
\begin{align*}
&\reac_1 \| \tilde{u}^\Ge \|_{L^1(\GO)} + \| \tilde{u}^\Ge \|_{L^1(\GG_+; |\weight|)}\\
\leq& \int_{\GO_\pm^\Ge} (\reac - \div \adv) |\tilde{u}^\Ge|\,dx + \int_{{\GG_+} \cap \ol{\GO_\pm^\Ge}} |u^\Ge| \weight\,d\Gs_x\\
=& \int_{\GO_+^\Ge} (\reac - \div \adv) \tilde{u}^\Ge\,dx - \int_{\GO_-^\Ge} (\reac - \div \adv) \tilde{u}^\Ge\,dx + \int_{{\GG_+} \cap \ol{\GO_+^\Ge}} \tilde{u}^\Ge \weight\,d\Gs_x - \int_{{\GG_+} \cap \ol{\GO_-^\Ge}} \tilde{u}^\Ge \weight\,d\Gs_x\\
=& \int_{\GO_+^\Ge} f^\Ge\,dx - \int_{\GO_-^\Ge} f^\Ge\,dx - \int_{{\GG_-} \cap \ol{\GO_+^\Ge}} \tilde{u}^\Ge \weight\,d\Gs_x + \int_{{\GG_-} \cap \ol{\GO_-^\Ge}} \tilde{u}^\Ge \weight\,d\Gs_x\\
\leq& \| f^\Ge \|_{L^1(\GO)} + \| \tilde{u}^\Ge \|_{L^1(\GG_-; |\weight|)},
\end{align*}
which implies that $\tilde{u}^\Ge \to 0$ in $L^1(\GO)$ and $\tilde{u}^\Ge|_{\GG_+} \to 0$ in $L^1(\GG_+; \weight)$ as $\Ge \to 0$. Since
\[
\| \tilde{u} \|_{L^1(\GO)} \leq \| \tilde{u} - \tilde{u}^\Ge \|_{L^1(\GO)} + \| \tilde{u}^\Ge \|_{L^1(\GO)},
\]
we conclude that $\tilde{u} = 0$, which shows the uniqueness of the solution to \eqref{BVP} in $W^1_\adv(\GO)$. 

The estimate for the strong solution with $p = 1$ in Theorem \ref{thm:WP_Wp} is derived in the same way as for the case $1 < p < \infty$. Therefore, Theorem \ref{thm:WP_Wp} is proved.

\section{A sufficient condition for the density assumption \eqref{eq:density}} \label{sec:sufficient}

We have established well-posedness results on $L^p(\GO)$ and $W^p_{\adv, \tr}(\GO)$ under the assumption \eqref{eq:density}. In this section, we give a sufficient condition for it to hold.

If there is no point in $\ol{\GG_+} \cap \ol{\GG_-}$, then the boundaries $\GG_\pm$ are well-separated, and the density \eqref{eq:density} holds \cite{2004EG}. In what follows, we consider the case where a point $\ol{\GG_+} \cap \ol{\GG_-}$ does exist.

We assume that the intersection $\ol{\GG_+} \cap \ol{\GG_-}$ consists of finite number of connected components $\GG_k$, $k = 1, \ldots, K$ with Lipschitz boundaries $\partial \GG_k$, and there exist open sets $N_k$, $k = 1, \ldots, K$, of $\Rbb^d$ such that $\GG_k \subset N_k$ for all $k = 1, \ldots, K$ and $N_{k_1} \cap N_{k_2} = \emptyset$ if $k_1 \neq k_2$. Also, let $\GO_k := \GO \cap N_k$ and $\GG_{\pm, k} := \GG_\pm \cap N_k$. We further assume that, for each $k$, either of the following two cases holds: (i) For any $T > 0$, there exists a closed subset $\GG_{-, k}'$ of $\p \GO$ with the Lipschitz boundary $\partial \GG_{-, k}'$ such that $\GG_k \subset \GG_{-, k}' \subset \GG_{-, k} \cup \GG_k$, $|\GG_{-, k}'| > 0$, and all of the solutions $x(t)$ to the Cauchy problem \eqref{Cauchy_boundary} with $x_0 \in \GG_{-, k}'$ reaches at $\GG_+$ before $t = T$. (ii) There exists $T > 0$ such that for any closed subset $\GG_{-, k}'$ of $\p \GO$ with the Lipschitz boundary $\partial \GG_{-, k}'$ and with $\GG_k \subset \GG_{-, k}' \subset \GG_{-, k} \cup \GG_k$, all of the solutions $x(t)$ to the Cauchy problem \eqref{Cauchy_boundary} with $x_0 \in \GG_{-, k}'$ cannot reach at $\GG_+$ before $t = T$. By renumbering, we assume that first $K_1$ components of $\GG_k$ correspond to the case (i), and the other $K - K_1$ components correspond to the case (ii).

We first investigate the case (i). Let $\GG_{-, k, 1}'$ and $\GG_{-, k, 2}'$ be closed subsets in $\GG_{-, k} \cup \GG_k$ with Lipschitz boundaries $\partial \GG_{-, k, j}'$ such that $\GG_k \subset \GG_{-, k, 1}' \subset \GG_{-, k, 2}'$, $|\GG_{-, k, 1}'| > 0$, and $d((\partial \GG_{-, k, 1}') \setminus \GG_k, (\partial \GG_{-, k, 2}') \setminus \GG_k) > 0$. Also, let $g$ be a Lipschitz function on $\GG_{-, k} \cup \GG_k$ such that, $0 \leq g \leq 1$, $g(x) = 0$ for $x \in \GG_{-, k, 1}'$ and $g(x) = 1$ for $(\GG_{-, k} \cup \GG_k) \setminus \GG_{-, k, 2}'$. We construct a solution $\Gy_k$ to the boundary value problem
\begin{equation} \label{eq:BVP_characteristic}
\begin{cases}
\adv \cdot \nabla \Gy_k = 0 &\mbox{ in } \GO_k,\\
\Gy_k = g &\mbox{ on } \GG_{-, k}
\end{cases}
\end{equation}
on $\ol{\GO_k}$ by the method of characteristic lines. By extending $\Gy_k$ upto $\ol{\GO}$ by $\Gy_k(x) = 1$ for $x \in \ol{\GO} \setminus N_k$ and $\Gy_k(x) = 0$ for $x \in \GG_k$, we regard it as a Lipschitz function on $\ol{\GO}$. We notice that $0 \leq \Gy_k(x) \leq 1$ for all $x \in \ol{\GO}$, and by the assumption on $\GG_k$, the function $\Gy_k$ vanishes near a smaller neighborhood of $\GG_k$ in $\Rbb^d$. 

Let $\Gy := \prod_{k = 1}^{K_1} \Gy_k$ when $K_1 \geq 1$ and $\Gy = 1$ when $K_1 = 0$. Then, we have $\adv \cdot \nabla \Gy(x) = 0$ for all $x \in \GO$ and $\Gy(x) \to 1$ for all $x \in \ol{\GO} \setminus (\cup_{k = 1}^{K_1} \GG_k )$ as $\max_{1 \leq k \leq K_1} |\GG_{-, k, 2}'| \to 0$. Let $u \in W^p_{\adv, \tr}(\GO)$. Then, based on the above properties of $\Gy$, we have 
\begin{align*}
\| u - \Gy u \|_{W^p_{\adv, \tr}(\GO)}^p =& \| (1 - \Gy) u \|_{L^p(\GO)}^p + \| (1 - \Gy) \adv \cdot \nabla u \|_{L^p(\GO)}^p\\ 
&+ \| (1 - \Gy) \tilde{\Gg}_+ u \|_{L^p(\GG_+; \weight)}^p + \| (1 - \Gy) \tilde{\Gg}_- u \|_{L^p(\GG_-; |\weight|)}^p.
\end{align*}
When $K_1 \geq 1$, Lebesgue's convergence theorem implies that, for any $\Ge > 0$, there exist $\GG_{-, k, 1}'$ and $\GG_{-, k, 2}'$, $k = 1, \ldots, K_1$, such that $\| u - \Gy u \|_{W^p_{\adv, \tr}(\GO)} < \Ge$. Also, $u = \Gy u$ when $K_1 = 0$. Thus, in what follows, we consider approximation of $\Gy u$ instead of $u$.

We next investigate the case (ii). This case does not occur when $K_1 = K$. Thus, in what follows, we assume that $k_1 < K$. 

Let $M_{k, 1}$ and $M_{k, 2}$ be open neighborhoods of $\GG_k$ in $\Rbb^d$ with Lipschitz boundaries such that $\GG_k \subset M_{k, 1}$, $\ol{M_{k, 1}} \subset M_{k, 2}$, and $\ol{M_{k, 2}} \subset N_k$. It is worth mentioning that $d(\p M_{k, 1}, \p M_{k, 2}) > 0$ and $d(\p M_{k, 2}, \p N_k) > 0$. 

We introduce two kinds of cut-off functions. For $k = K_1 + 1, \ldots, K$, let $\Gf_k \in \Lip(\ol{\GO})$ such that $0 \leq \Gf_k \leq 1$, $\Gf_k(x) = 1$ for $x \in \ol{M_{k, 2}}$, and $\supp \Gf_k \subset N_k$. Also, let $\Gf := 1 - \sum_{k=K_1 + 1}^K \Gf_k$. Furthermore, let $\Gc_k \in \Lip(\ol{\GO})$ such that $0 \leq \Gc_k \leq 1$, $\Gc_k(x) = 1$ for $x \in \ol{M_{k, 1}}$, and $\supp \Gc_k \subset M_{k, 2}$. 

For $u \in W^p_{\adv, \tr}(\GO)$, there exists a sequence $\{ u_m \} \subset \Lip(\ol{\GO})$ such that $\| u - u_m \|_{W^p_\adv(\GO)} \to 0$ as $m \to \infty$. For the function $u$ and the sequence $\{ u_m \}$, let $u_0 := \Gf \Gy u$, $u_k := \Gf_k \Gy u$, and $u_{m, 0} := \Gf \Gy u_m$, $u_{m, k} := \Gf_k \Gy u_m$, $k = K_1 + 1, \ldots, K$, respectively. Then, we have $u_{m, 0}, u_{m, k} \in \Lip(\ol{\GO})$ and
\[
\| \Gy(u - u_m) \|_{W^p_{\adv, \tr}(\GO)} \leq \| u_0 - u_{m, 0} \|_{W^p_{\adv, \tr}(\GO)} + \sum_{k = K_1 + 1}^K \| u_k - u_{m, k} \|_{W^p_{\adv, \tr}(\GO)}.
\]
Since $\supp (u_0 - u_{m, 0})$ is away from all of $\GG_k$, we can apply Proposition \ref{prop:trace_adv_loc} to obtain 
\[
\| u_0 - u_{m ,0} \|_{W^p_{\adv, \tr}(\GO)} \leq C \| u_0 - u_{m, 0} \|_{W^p_\adv(\GO)} \leq C \| u - u_m \|_{W^p_\adv(\GO)} \to 0
\]
as $m \to \infty$. Thus, it suffices to show that $\| u_k - u_{m, k} \|_{W^p_{\adv, \tr}(\GO)} \to 0$ as $m \to \infty$ for each $k = K_1 + 1, \ldots, K$. In particular, since $\| u_k - u_{m, k} \|_{W^p_\adv(\GO)} \leq C \| u - u_m \|_{W^p_\adv(\GO)}$, it suffices to show that $\| \tilde{\Gg}_\pm u_k - u_{m ,k} \|_{L^p(\GG_\pm; |\weight|)} \to 0$ as $m \to \infty$.

Recalling the definition of $\Gf_k$, we see that $(u_k - u_{m ,k})|_{\p N_k \cap \GO} = 0$ and $\| \tilde{\Gg}_\pm u_k - u_{m ,k} \|_{L^p(\GG_\pm; |\weight|)} = \| \tilde{\Gg}_\pm u_k - u_{m ,k} \|_{L^p(\GG_{\pm, k}; |\weight|)}$. Thus, instead of showing the convergence, we introduce the following estimate: $\| v \|_{L^p(\GG_{\pm, k})} \leq C \| v \|_{W^p_{\adv}(\GO_k)}$ for all $v \in \Lip(\ol{\GO_k})$ with $v|_{\p N_k \cap \GO} = 0$.

We decompose the function $v$ into two parts: $v = v_1 + v_2$, where $v_1 := \Gc_k v$ and $v_2 := (1 - \Gc_k) v$. By Proposition \ref{prop:trace_adv_loc} again, we have $\| v_2 \|_{L^p(\GG_{\pm, k}; |\weight|)} \leq C \| v_2 \|_{W^p_\adv(\GO_k)}$ with some positive constant $C$. In order to derive an estimate for $\| v_1 \|_{L^p(\GG_{\pm, k}; |\weight|)}$, we introduce the following subdomains. Let $\GO_{k, 2, \pm}$ be subsets of $\GO \cap M_{k, 2}$ such that the characteristic line $x(t)$ starting from $x \in \GO \cap M_{k, 2}$ hits $\GG_{\pm, k} \cap M_{k, 2}$ within $t = \pm T/3$. Due to the condition (ii), we have $\GO_{k, 2, +} \cap \GO_{k, 2, -} = \emptyset$. We perform integration by parts on $\GO_{k, 2, \pm}$ to obtain
\[
\int_{\GO_{k, 2, \pm}} (\adv \cdot \nabla v_1) |v_1|^{p-2} v_1\,dx = \frac{1}{p} \int_{\p (\GO_{k, 2, \pm})} |v_1|^p \weight\,d\Gs_x - \frac{1}{p} \int_{\GO_{k, 2, \pm}} (\div \adv) |v_1|^p\,dx.
\]
We notice that $\p \GO_{k, 2, \pm} = (\GG_{\pm, k} \cap M_{k, 2}) \cup (\GO \cap \p M_{k, 2})$, and $v_1 = 0$ on $\GO \cap \p M_{k, 2}$. Thus, invoking $v_1 = 0$ on $\GG_{\pm, k} \setminus \GG_{\pm, k, 2}$, we obtain
\[
\int_{\p (\GO_{k, 2, \pm})} |v_1|^p \weight\,d\Gs_x = \int_{\GG_{\pm, k, 2}} |v_1|^p \weight\,d\Gs_x = \int_{\GG_{\pm, k}} |v_1|^p \weight\,d\Gs_x.
\]
Thus, following the argument in the proof of Lemma \ref{lem:trace_adv}, we get
\[
\| v_1 \|_{L^p(\GG_{\pm, k}; |\weight|)} \leq C_{2, p} \| v_1 \|_{W^p_\adv(\GO_{k, 2, \pm})} \leq C_{2, p} \| v_1 \|_{W^p_\adv(\GO_k)}.
\]
Therefore, we have
\begin{align*}
\| v \|_{L^p(\GG_{\pm, k}; |\weight|)} \leq& \| v_1 \|_{L^p(\GG_{\pm, k}; |\weight|)} + \| v_2 \|_{L^p(\GG_{\pm, k}; |\weight|)}\\
\leq& C (\| v_1 \|_{W^p_\adv(\GO_k)} + \| v_2 \|_{W^p_\adv(\GO_k)} )\\
\leq& C \| v \|_{W^p_\adv(\GO_k)}
\end{align*}
for all $v \in \Lip(\ol{\GO_k})$ with $v|_{\p N_k \cap \GO} = 0$. Since $\Lip(\ol{\GO})$ is dense in $W^p_\adv(\GO)$, the above estimate holds for $v = u_k - u_{m, k}$, which means that
\[
\| u_k - u_{m, k} \|_{W^p_{\adv, \tr}(\GO)} \leq C \| u_k - u_{m, k} \|_{W^p_\adv(\GO)} \to 0
\]
as $m \to \infty$ for each $k = K_1 + 1, \ldots, K$.

Thus, the relation \eqref{eq:density} is proved.

We provide two examples for the above situation in order to facilitate readers' understandings. 

We consider the setting in Example \ref{ex:unbounded} again. In this setting, we have $\ol{\GG_+} \cap \ol{\GG_-} = \{ O \}$. Since the intersection point $\GG_1 = O$ is unique, we take $N_1 = \mathbb{R}^2$ for the sake of simplicity. We shall show that it satisfies the condition (i). Indeed, the boundaries $\GG_\pm$ are parametrized as
\[
\GG_\pm = \{ (\pm s, s) \mid 0 < s < 1 \}.
\]
For fixed $T > 0$, let
\[
\GG_-' := \left\{ (- s, s) \mid 0 \leq s \leq \min \left\{ \frac{T}{4}, \frac{1}{8} \right\} \right\}.
\]
Then, it satisfies $\GG_1 \subset \GG_-' \subset \GG_-$ and $|\GG_-'| > 0$. Moreover, for $x_0 = (-s, s) \in \GG_-'$, the solution $x(t)$ to the Cauchy problem \eqref{Cauchy_boundary} is given by $x(t) = (-s + t, s)$. Thus, it reaches at $\GG_+$ when $t = 2s \leq T/2 < T$. Therefore, the intersection point $O$ satisfies the condition (i).

We give an example for the condition (ii) as follows.

\begin{example} \label{ex:counter}

Let $\adv(x) = (1, 0)$ and let $\GO$ be a bounded domain in $\Rbb^2$ which is surrounded by the following 7 line segments: 
\begin{align*}
\GG_1 :=& \{ (t, t) \mid 0 < t < 1 \},\\
\GG_2 :=& \{ (t, 1) \mid 1 \leq t \leq 2 \},\\
\GG_3 :=& \{ (t, 3-t) \mid 2 < t < 5/2 \},\\
\GG_4 :=& \{ (t, t-2) \mid 5/2 < t < 3 \},\\
\GG_5 :=& \{ (t, 1) \mid 3 \leq t \leq 4 \},\\
\GG_6 :=& \{ (t, 5- t) \mid 4 < t < 5 \},\\
\GG_7 :=& \{ (t, 0) \mid 0 \leq t \leq 5\}.
\end{align*}

\begin{figure}[h]

\centering

\begin{tikzpicture}[scale=1.5]
	\draw[thick, ->] (-1, 0) --(6, 0);
	\draw[thick, ->] (0, -1) --(0, 2);
	\draw[thick, ->] (1.5, 1.5) --(3.5, 1.5);
    \draw[ultra thick] (0,0) --(1,1) --(2,1) --(5/2, 1/2) --(3, 1) --(4, 1) -- (5, 0) --(0, 0);
    \draw[dashed] (0,1) --(1, 1);
    \draw[dashed] (0,0.5) --(2.5,0.5);
    \draw[dashed] (1, 0) --(1, 1); 
    \draw[dashed] (2, 0) --(2, 1); 
    \draw[dashed] (2.5, 0) --(2.5, 0.5);
    \draw[dashed] (3, 0) --(3, 1);
	 \draw[dashed] (4, 0) --(4, 1);   
    \node at (-0.2,1) {1};
    \node at (-0.2, 0.5) {$\frac{1}{2}$};
    \node at (1,-0.2) {1};
    \node at (2, -0.2) {2};
    \node at (2.5, -0.2) {$\frac{5}{2}$};
    \node at (3, -0.2) {3};
    \node at (4, -0.2) {4};
    \node at (5, -0.2) {5};
    \node at (-0.2,-0.2) {0};
    \node [below] at (6,0) {$x$};
    \node [left] at (0,2) {$y$};
    \node at (0.5, 0.8) {$\GG_1$};
    \node at (1.5, 1.2) {$\GG_2$};
    \node at (2.2, 0.6) {$\GG_3$};
    \node at (2.85, 0.6) {$\GG_4$};
    \node at (3.5, 1.2) {$\GG_5$};
    \node at (4.5, 0.8) {$\GG_6$};
    \node at (0.5, -0.2) {$\GG_7$};
    \node at (2.5, 1.7) {$\adv$};
\end{tikzpicture}

\caption{The domain $\GO$ in Example \ref{ex:counter}}

\end{figure}

We notice that $\GG_- = \GG_1 \cup \GG_4$, $\GG_+ = \GG_3 \cup \GG_6$, and $\GG_0 = \GG_2 \cup \GG_4 \cup \GG_7 \cup \{ x_* \}$, where $x_* := (5/2, 1/2) \in \ol{\GG_+} \cap \ol{\GG_-}$. We can see that the solution $x(t)$ to the Cauchy problem \eqref{Cauchy_boundary} starting from $\GG_4 \cup \{ x_* \}$ cannot reach at $\GG_+$ before $t = 1/2$. Thus, the intersection point $x_*$ satisfies the condition (ii). We remark that, in this setting, we have $W^p_{\adv, \tr}(\GO) = W^p_\adv(\GO)$.
\end{example}

\section*{Acknowledgement}
The first author was supported by JST Grant Number JPMJFS2123. The second author was supported by JSPS KAKENHI Grant Number JP21H00999. 


\end{document}